\newcommand{\bd}{\begin{description}}
\newcommand{\ed}{\end{description}}
\newcommand{\bi}{\begin{itemize}}
\newcommand{\ei}{\end{itemize}}
\newcommand{\be}{\begin{enumerate}}
\newcommand{\ee}{\end{enumerate}}
\newcommand{\beq}{\begin{equation}}
\newcommand{\eeq}{\end{equation}}
\newcommand{\beqs}{\begin{eqnarray*}}
\newcommand{\eeqs}{\end{eqnarray*}}
\newcommand{\rn}[1]{{\color{red} \bf #1}}
\definecolor{DarkGreen}{rgb}{0.2, 0.6, 0.3}
\newtheorem{theorem}{Theorem}
\newtheorem{conjecture}{Conjecture}
\newtheorem{lemma}{Lemma}
\newtheorem{corollary}[theorem]{Corollary}
\newtheorem{fact}{Fact}
\newtheorem{problem}{Problem}
\begin{document}
\title{\bf All partitions have small parts - Gallai-Ramsey numbers of bipartite graphs}

\author{Haibo Wu\footnote{School of Science, Northwestern Polytechnical University, Xi'an, P. R. China. \tt{whb@nwpu.edu.cn}}, Colton Magnant\footnote{Department of Mathematical Sciences, Georgia Southern University, Statesboro, GA 30460 USA. \tt{cmagnant@georgiasouthern.edu}}, Pouria Salehi Nowbandegani\footnote{Department of Mathematics, Vanderbilt University, Nashville, TN 37212 USA \ttfamily{pouria.salehi.nowbandegani@vanderbilt.edu}}, Suman Xia\footnote{School of Mathematical Sciences, LMAM, Peking University, Beijing 100871, P. R. China \ttfamily{xiasuman@pku.edu.cn}}}

\maketitle

\begin{abstract}
Gallai-colorings are edge-colored complete graphs in which there are no rainbow triangles. Within such colored complete graphs, we consider Ramsey-type questions, looking for specified monochromatic graphs. In this work, we consider monochromatic bipartite graphs since the numbers are known to grow more slowly than for non-bipartite graphs. The main result shows that it suffices to consider only $3$-colorings which have a special partition of the vertices. Using this tool, we find several sharp numbers and conjecture the sharp value for all bipartite graphs. In particular, we determine the Gallai-Ramsey numbers for all bipartite graphs with two vertices in one part and initiate the study of linear forests.
\end{abstract}

\section{Introduction}

Ramsey numbers have been a hot topic in mathematics for decades now due to their intrinsic beauty, wide applicability, and overwhelming difficulty despite somewhat misleadingly simple statements. The notion of ``order amid chaos'' defines the entire concept while applications to many different areas of mathematics and other sciences drive the study of new directions and extensions. See \cite{MR1670625} for a dynamic survey of known small Ramsey numbers and \cite{R04} for a dynamic survey of applications of Ramsey Theory.

Recall that the \emph{Ramsey number} $R(p,q)$ is the minimum integer $n$ such that, in every coloring of the edges of the complete graph on $n$ vertices using red and blue, there is either a red clique of order $p$, or a blue clique of order $q$. For more general graphs $G$ and $H$, let $R(G, H)$ denote the minimum integer $n$ such that, in every coloring of the edges of the complete graph on $n$ vertices using red and blue, there is either a red copy of $G$ or a blue copy of $H$.

We consider edge-colorings of complete graphs which contain no rainbow triangles. This restricted class of colorings is particularly interesting due to the powerful structure provided by the following result of Gallai.

\begin{theorem}[\cite{CE97, G67, GS04}]\label{Thm:G-Part}
In any edge-coloring of a complete graph with no rainbow triangle, there exists a partition of the vertices into at least two parts (called a \emph{Gallai partition} or \emph{G-partition} for short) such that, there are at most two colors on the edges between the parts, and only one color on the edges between each pair of parts.
\end{theorem}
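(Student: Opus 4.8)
The plan is to avoid induction and instead use an extremal choice of partition. Call a partition of the vertex set into at least two nonempty parts \emph{good} if every pair of parts spans a monochromatic complete bipartite graph; since the partition into singletons is good, good partitions exist. Fix a good partition $\mathcal{P}=\{V_1,\dots,V_k\}$ using the fewest colors on the edges that run between distinct parts, let $\mathcal{C}$ be that set of colors, and form the \emph{reduced graph} $H$ on vertex set $\{1,\dots,k\}$ by coloring edge $ij$ with the unique color occurring between $V_i$ and $V_j$. Since a rainbow triangle in $H$ would lift (choosing one vertex from each of three parts) to a rainbow triangle in the original coloring, $H$ is again a rainbow-triangle-free edge-coloring of $K_k$, and it uses exactly the colors in $\mathcal{C}$. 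It suffices to prove $|\mathcal{C}|\le 2$, since then $\mathcal{P}$ is itself the desired Gallai partition.

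Assume for contradiction that $|\mathcal{C}|\ge 3$, so $k\ge 3$; the next two steps are connectivity reductions. First I would show that, for each color $i\in\mathcal{C}$, the color-$i$ subgraph of $H$ spans a connected graph on all of $\{1,\dots,k\}$: if it had a component $D\subsetneq\{1,\dots,k\}$, then every edge from a vertex $u\notin D$ to $D$ avoids color $i$, and walking along a monochromatic color-$i$ path inside $D$ while applying the no-rainbow-triangle condition repeatedly forces all edges from $u$ to $D$ to share one color; hence every component of the color-$i$ subgraph is a module of $H$. Merging the parts of $\mathcal{P}$ that lie in a common such component then yields a coarser good partition (pairwise monochromaticity of the merged partition follows from the module property via the usual four-vertex argument) whose between-edges avoid color $i$, contradicting minimality of $|\mathcal{C}|$. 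Symmetrically, for each $i\in\mathcal{C}$ the complement of the color-$i$ subgraph must span a connected graph: otherwise its components are again modules of $H$, all edges between them have color $i$, and merging the corresponding parts of $\mathcal{P}$ produces a good partition whose between-edges use only the single color $i$ --- once more contradicting minimality.

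After these reductions the core case remains: $H$ is a rainbow-triangle-free edge-coloring of $K_k$ with $k\ge 3$, using at least three colors, in which every color class and every complementary color class spans a connected graph. Showing that this configuration cannot occur is the main obstacle. The approach I would take is to fix a vertex $v$ of $H$, write $S_i$ for its set of color-$i$ neighbors (nonempty for every $i\in\mathcal{C}$ by connectivity of the color classes), and use the no-rainbow-triangle condition --- which forces every edge between $S_i$ and $S_j$, for $i\ne j$, to have color $i$ or $j$ --- together with induction on $k$ applied to $H-v$ to produce a nontrivial module of $H$, contradicting the connectivity hypotheses. The delicate point is that a nontrivial module of $H-v$ need not remain a module of $H$, so the argument must track precisely how $v$ attaches to the modular structure of $H-v$ (for instance by passing to the canonical partition of $H-v$ into maximal proper modules and examining the colors from $v$ to those parts); alternatively, one can deduce the core case from Gallai's classical structure theorem for comparability graphs through the standard correspondence between rainbow-triangle-free colorings and transitive orientations. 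This last step is where the genuine work lies; everything before it is essentially bookkeeping.
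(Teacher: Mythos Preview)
The paper does not prove this theorem; it is quoted from the literature (Gallai~\cite{G67}, with the coloring reformulation in \cite{GS04} and \cite{CE97}), so there is no in-paper argument to compare against. I therefore assess your proposal on its own.

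Your two reductions are correct, but the second one is redundant: once every color class of $H$ is a connected spanning subgraph and $|\mathcal{C}|\ge 3$, the complement of each color class contains at least one other connected spanning color class and is automatically connected. So after your first reduction the ``core case'' is simply: \emph{a Gallai coloring of $K_k$ with at least three colors in which every color class is connected and spanning}. The trouble is that ruling out this core case is not a technical mop-up --- it is equivalent to the theorem itself. Indeed, if the Gallai partition exists, any color not among the (at most two) reduced-graph colors lives entirely inside the parts and is therefore disconnected; conversely, your own first reduction plus induction on $|\mathcal{C}|$ shows that ``some color class is disconnected whenever $|\mathcal{C}|\ge 3$'' yields the partition. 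So the extremal-partition framework has only reformulated the problem, not advanced it.

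Your sketch for the core case does not close this gap. The inductive approach via $H-v$ runs into exactly the obstacle you flag: a module of $H-v$ need not be a module of $H$, and tracking how $v$ attaches to the modular decomposition of $H-v$ is essentially reproving the modular (substitution) decomposition theorem for Gallai colorings --- which is the theorem. Your alternative, deducing the core case from Gallai's structure theorem for comparability graphs via the transitive-orientation correspondence, is precisely the route taken in \cite{GS04}; it is valid but not an independent argument, since Gallai's comparability-graph theorem is the same result in different clothing. As written, the proposal is a correct reduction to an equivalent statement, with the substantive step left open.
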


In light of this result, we say that a colored complete graph with no rainbow triangle is a \emph{Gallai coloring} (or \emph{G-coloring} for short). Closely related to results in \cite{FGP15}, Fox et al.~posed a conjecture about monochromatic complete graphs. In order to concisely state their conjecture, we must present some definitions. 

Given a graph $H$, the (\emph{$k$-colored}) \emph{Gallai-Ramsey number} $gr_{k}(K_{3} : H)$ is defined to be the minimum integer $n$ such that every $k$-coloring (using all $k$ colors) of the complete graph on $n$ vertices contains either a rainbow triangle or a monochromatic copy of $H$.

We refer to the survey of rainbow generalizations of Ramsey Theory \cite{FMO10, FMO14} for more information on this topic and a complete list of known results involving Gallai-Ramsey numbers. We are now able to state the conjecture of Fox et al.

\begin{conjecture}[\cite{FGP15}]\label{Conj:Fox}
For $k\ge 1$ and $p \geq 3$,
$$
gr_{k}(K_{3} : K_{p}) = \begin{cases}
(R(p, p) - 1)^{k/2} + 1 & \text{ if $k$ is even,}\\
(p - 1)(R(p, p) - 1)^{(k - 1)/2} + 1 & \text{ if $k$ is odd.}
\end{cases}
$$
\end{conjecture}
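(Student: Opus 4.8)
Since the final statement is a \emph{conjecture} of Fox, Grinshpun, and Pach that is still open for $p \ge 5$, what I would realistically aim for is a proof of the lower bound, which is elementary, together with the inductive machinery that reduces the upper bound to a structural case analysis — and I would then flag exactly where that analysis currently stalls.

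\textbf{Lower bound.} For each $p \ge 3$ I would build, for every $k$, a Gallai $k$-coloring of $K_N$ with no monochromatic $K_p$, where $N$ is one less than the claimed value, by iterated substitution. Fix a $2$-coloring $F$ of $K_{R(p,p)-1}$ in colors $\{1,2\}$ with no monochromatic $K_p$ (it exists by the definition of $R(p,p)$) and the monochromatic $K_{p-1}$ in color $1$, call it $T$. For $k$ even set $G_2 := F$ and, for $i \ge 2$, let $G_{2i}$ be the complete graph obtained from a fresh copy of $F$ recolored in colors $\{2i-1,2i\}$ by substituting a copy of $G_{2i-2}$ for each vertex; for $k$ odd set $G_1 := T$ and iterate the same substitution with colors $\{2j,2j+1\}$ at stage $j$. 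An induction shows $G_k$ has the claimed order, uses all $k$ colors, has no rainbow triangle (substituting Gallai colorings into a Gallai coloring yields a Gallai coloring), and has no monochromatic $K_p$ (a monochromatic clique in the color that first appears at some stage must lie inside a single part of that stage). This is routine bookkeeping.

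\textbf{Upper bound.} Here I would induct on $k$, with $k=1$ ($n \le p-1$) and $k=2$ ($n \le R(p,p)-1$) immediate. Given a Gallai $k$-coloring of $K_n$ with $k \ge 3$ and $n$ at least the claimed bound, apply Theorem~\ref{Thm:G-Part} to obtain a Gallai partition $V_1,\dots,V_t$ with $t \ge 2$ chosen minimal, so the reduced complete graph $R$ on $t$ vertices uses at most two colors, say red and blue. If $t \ge R(p,p)$, then $R$ has a monochromatic $K_p$ that lifts (one vertex per part) to a monochromatic $K_p$ in $K_n$; so assume $t \le R(p,p)-1$. The plan is to find, using the structure of the two-colored graph $R$ (connectivity of its color classes, the sizes of its monochromatic cliques and books), either a cross-part monochromatic $K_p$, or a single part $V_i$ carrying at least $n/(R(p,p)-1)$ vertices on which the induced coloring effectively uses one fewer color — after merging $V_i$ with the parts joined to it in a fixed color and checking that such a monochromatic neighborhood cannot complete a $K_p$ unless one already lies inside a part — and then to invoke the inductive hypothesis there.

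\textbf{Main obstacle.} The difficulty, and the reason the conjecture is verified only for $p \in \{3,4\}$, is precisely this color-accounting step: a part $V_i$ may genuinely use all $k$ colors internally, and a monochromatic $K_p$ can straddle a part and its monochromatic neighborhood in $R$, so one cannot simply discard a color for free. The known small-$p$ arguments push through by a long, $p$-specific case analysis of the reduced graph; making that analysis uniform in $p$ — ideally via a clean structural normalization of the relevant colorings analogous to the reduction to special $3$-colorings developed for the bipartite setting in this paper — is the real obstacle.
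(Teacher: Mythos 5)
The statement you were asked about is not a theorem of this paper at all: it is Conjecture~\ref{Conj:Fox}, quoted from Fox, Grinshpun, and Pach, and the paper offers no proof of it. The only verified cases are $p=3$ and $p=4$ (Theorems~\ref{Thm:grK3} and~\ref{Thm:Main}, both proved elsewhere), so there is no argument in the paper against which to compare your attempt. You correctly recognize this, and your write-up is accordingly not a proof of the statement but a proof of the lower bound together with a candid description of why the upper bound is open. That is the honest and appropriate response; just be aware it does not, and cannot yet, establish the displayed equality for general $p$.

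On the substance of what you do prove: the lower bound construction by iterated substitution is the standard one and is correct. The order count gives $(R(p,p)-1)^{k/2}$ (even $k$) or $(p-1)(R(p,p)-1)^{(k-1)/2}$ (odd $k$); substituting Gallai colorings into a $2$-colored complete graph preserves the absence of rainbow triangles; and a monochromatic clique in one of the two top-level colors meets each part in at most one vertex, hence has at most $p-1$ vertices, while a monochromatic clique in an older color lies inside a single part and is handled by induction. One small wording fix: you should say that $T$ is \emph{a} monochromatic $K_{p-1}$, i.e., a complete graph on $p-1$ vertices entirely in color $1$, chosen as the base of the odd-$k$ tower, not something extracted from $F$. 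Your upper-bound outline (minimal Gallai partition, reduced graph on $t\le R(p,p)-1$ parts, attempt to drop a color on a large part) is the approach used in the known cases, and the obstacle you identify, that a part may use all $k$ colors and a monochromatic $K_p$ may straddle a part and its monochromatic neighborhood, is exactly where the general argument currently fails. So: no gap in what you claim, but the full conjecture remains unproved by you, by this paper, and by everyone else for $p\ge 5$.
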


The case where $p = 3$ was actually verified first in 1983 by Chung and Graham \cite{CG83} and then again over the years in different contexts.

\begin{theorem}[\cite{AI08,CG83,GSSS10}]\label{Thm:grK3}
For $k \geq 1$,
$$
gr_{k}(K_{3} : K_{3}) = \begin{cases}
5^{k/2} + 1 & \text{if $k$ is even,}\\
2\cdot 5^{(k-1)/2} + 1 & \text{if $k$ is odd.}
\end{cases}
$$
\end{theorem}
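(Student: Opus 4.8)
I would prove the matching lower and upper bounds separately, the upper bound by induction on $k$ driven by the Gallai partition of Theorem~\ref{Thm:G-Part}. Throughout write $n_k$ for the claimed value of $gr_k(K_3:K_3)$ minus $1$, so that $n_1=2$, $n_2=5$, and $n_k=5\,n_{k-2}$ for all $k\ge 3$; note $n_j$ is strictly increasing in $j$.

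For the lower bound, I would construct for each $k\ge 1$ a Gallai $k$-coloring $G_k$ of $K_{n_k}$ with no monochromatic triangle. Take $G_1$ to be a single edge and $G_2$ the coloring of $K_5$ whose two color classes are the two complementary $5$-cycles (both triangle-free, and with only two colors no rainbow triangle can occur). For $k\ge 3$, substitute a disjoint copy of $G_{k-2}$ into each vertex of this $K_5$, coloring all edges between two copies with the color (one of the two new colors $k-1,k$) of the corresponding edge of $K_5$. One checks that this is again a Gallai coloring: a triangle meeting at least two copies has at least two edges in the new colors, so it uses at most the two new colors plus possibly one old color on an edge inside a copy. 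It has no monochromatic triangle either: a monochromatic triangle in an old color would lie entirely inside one copy, and one in a new color would project to a monochromatic triangle of $K_5$. Since $|V(G_k)|=5\,|V(G_{k-2})|=n_k$, this gives $gr_k(K_3:K_3)\ge n_k+1$.

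For the upper bound I would induct on $k$; the cases $k=1$ (every coloring of $K_3$ is monochromatic) and $k=2$ ($gr_2(K_3:K_3)=R(3,3)=6$, since two colors cannot form a rainbow triangle) are immediate. For $k\ge 3$, suppose $K_n$ with $n=n_k+1$ is Gallai-colored with no monochromatic triangle, and apply Theorem~\ref{Thm:G-Part} to get a Gallai partition into parts $V_1,\dots,V_m$ with reduced $2$-colored complete graph $R$ on the parts. A monochromatic triangle of $R$ would yield one in $K_n$, so $R$ has none, whence $2\le m\le 5$ by $R(3,3)=6$. The crucial local fact is: if $V_i$ is incident in $R$ to an edge of some color, that color cannot appear inside $V_i$, since a vertex of the neighboring part would complete a monochromatic triangle; hence a part seeing both colors in $R$ spans at most $k-2$ colors (so has at most $n_{k-2}$ vertices by induction), while a part seeing only one color spans at most $k-1$ colors (so has at most $n_{k-1}$ vertices). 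Now one observes that if $m\ge 4$ and some part saw only one color, the remaining $\ge 3$ parts would be pairwise joined by the other color, a monochromatic triangle; so for $m=4,5$ \emph{every} part sees both colors, giving $n\le m\,n_{k-2}\le 5\,n_{k-2}=n_k$. For $m=3$ at most one part sees a single color (two parts seeing opposite single colors would have a doubly-colored joining edge), so $n\le n_{k-1}+2n_{k-2}$, and for $m=2$ each part sees the one color, so $n\le 2n_{k-1}$. In each of these cases the closed forms give $n_{k-1}+2n_{k-2}\le n_k$ and $2n_{k-1}\le n_k$, so $n\le n_k$ in every case, contradicting $n=n_k+1$ and finishing the induction.

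The arithmetic checks ($2n_{k-1}\le n_k$, $n_{k-1}+2n_{k-2}\le n_k$, $5n_{k-2}=n_k$) and the monotonicity of $n_j$ are routine consequences of the explicit formula. The step I expect to be the real obstacle is $m\in\{4,5\}$: the naive bound ``at most one part of size $n_{k-1}$, the rest of size $n_{k-2}$'' is too weak for even $k$, and one genuinely must exploit that a $2$-coloring of $K_4$ or $K_5$ with no monochromatic triangle has every vertex incident to both colors, which is what forces \emph{all} parts down to size $n_{k-2}$ and closes the gap.
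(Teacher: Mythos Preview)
The paper does not prove Theorem~\ref{Thm:grK3}; it is quoted as a known result with citations to \cite{AI08,CG83,GSSS10}. So there is no in-paper argument to compare against, and your task is really to supply a self-contained proof.

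Your proposal is correct and is essentially the standard argument from the cited sources. The lower-bound construction (iterated blow-up of the two-colored $K_5$) is exactly the classical one, and your verification that it is Gallai and triangle-free in each color is fine. For the upper bound, the induction via a Gallai partition, together with the observation that a color appearing on the edges leaving a part cannot appear inside that part, is the key mechanism in \cite{GSSS10}. Your handling of the case split on the number $m$ of parts is right; in particular, the point you flag as the real obstacle---that for $m\in\{4,5\}$ every vertex of a triangle-free $2$-coloring of $K_m$ must see both colors, so \emph{every} part drops to size at most $n_{k-2}$---is precisely what makes the bound tight, and you have it. Two cosmetic remarks: in the $m=3$ case your phrase ``two parts seeing opposite single colors'' only explicitly excludes different single colors, but two parts seeing the \emph{same} single color force a monochromatic triangle in $R$, which you are already assuming does not exist; and since the paper's definition of $gr_k$ requires all $k$ colors to be used, you might note that your induction is really bounding the maximum order of a Gallai coloring with \emph{at most} $k$ colors and no monochromatic $K_3$, which is the right formulation for the inductive hypothesis and implies the stated result by monotonicity of $n_j$.
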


The next case of this conjecture was recently proven in \cite{LMSSS17}.

\begin{theorem}[\cite{LMSSS17}]\label{Thm:Main}
For $k\ge 1$,
$$
gr_{k}(K_{3} : K_{4}) = \begin{cases} 17^{k/2} + 1 & \text{ if } k \text{ is even,}\\
3\cdot 17^{(k - 1)/2} + 1 & \text{ if } k \text{ is odd.}
\end{cases}
$$
\end{theorem}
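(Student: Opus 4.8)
The plan is to establish matching lower and upper bounds for the claimed value $f(k)+1$, where $f(k)=17^{k/2}$ for even $k$ and $f(k)=3\cdot 17^{(k-1)/2}$ for odd $k$. Two facts drive everything: the classical Ramsey value $R(4,4)=18$, and the recursion $f(1)=3$, $f(2)=17$, $f(k)=17\,f(k-2)$ for $k\ge 3$ (with $f$ increasing).

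\textbf{Lower bound.} Fix a red/blue coloring $\chi_2$ of $K_{17}$ with no monochromatic $K_4$ (it exists since $R(4,4)=18$) and let $\chi_1$ be the one-coloring of $K_3$; neither has a monochromatic $K_4$. For $k\ge 3$ build $\chi_k$ by substitution: relabel a copy of $\chi_{k-2}$ to use colors $3,\dots,k$ and replace each vertex of $\chi_2$ by a disjoint copy of this gadget. I would check that $\chi_k$ is a Gallai coloring --- any triangle lies inside one gadget (fine by induction), meets exactly two gadgets (two of its edges then carry the between-gadget color, so at most two colors), or meets three gadgets (colors inherited from $\chi_2$, so at most two) --- and that $\chi_k$ has no monochromatic $K_4$: such a $K_4$ in a color $\ge 3$ lies in a single gadget (contradiction by induction), while one in color $1$ or $2$ cannot have two vertices in a common gadget, hence projects to a monochromatic $K_4$ of $\chi_2$ (contradiction). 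Counting vertices gives $|V(\chi_k)|=17\,f(k-2)=f(k)$, so $gr_k(K_3:K_4)\ge f(k)+1$.

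\textbf{Upper bound: skeleton.} I would induct on $k$; the base cases $k=1$ ($n=4$) and $k=2$ ($n=R(4,4)=18$) are classical. For $k\ge 3$, let $G$ be a Gallai $k$-coloring of $K_n$ with $n=f(k)+1=17\,f(k-2)+1$, and suppose $G$ has no monochromatic $K_4$. By Theorem~\ref{Thm:G-Part}, choose a Gallai partition $V_1,\dots,V_t$ with $t\ge 2$ minimum; say the between-part edges use red and blue, with reduced $2$-colored complete graph $R$ on $t$ vertices. A monochromatic $K_4$ in $R$ lifts --- one vertex per part --- to a monochromatic $K_4$ in $G$, so $R$ has no monochromatic $K_4$, hence $t\le R(4,4)-1=17$. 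By pigeonhole some part, say $V_1$, has $|V_1|\ge\lceil n/17\rceil=f(k-2)+1$. If $V_1$ used at most $k-2$ colors we would be done by the induction hypothesis, so the remaining work is to show that the two between-colors force a monochromatic $K_4$ unless every part is small enough to contradict $n=17\,f(k-2)+1$; I would also use the standard consequence of minimality of $t$ that, when $t\ge 3$, each color class of $R$ spans a connected subgraph.

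\textbf{The main obstacle: the case analysis.} Let $A$ (resp.\ $B$) be the union of the parts joined to $V_1$ in red (resp.\ blue). The workhorses are lifting lemmas such as: if $V_1$ contains a red edge, then no part of $A$ contains a red edge and no two parts of $A$ are joined red (in either case one immediately builds a red $K_4$ using $V_1$'s red edge and red between-part edges), so $R$ restricted to $A$ is all blue and thus $A$ has at most $3$ parts; symmetrically for blue and $B$; and if, say, $|A|=3$ then no part of $A$ contains a blue edge either. Pushing these and similar observations through the surviving configurations, one concludes that in each case either a monochromatic $K_4$ appears or the colorings of the large parts are restricted enough --- $V_1$ avoids one of red/blue, and a large part meeting $A$ (or $B$) avoids one of red/blue --- that the induction hypothesis applies with $k-1$ or $k-2$ colors, a contradiction since $3\,f(k-1)\le f(k)<n$ and $17\,f(k-2)<n$. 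I expect the bookkeeping to be heaviest, and the argument most delicate, in two places: the degenerate partition $t=2$, where only one between-color is available and one may need a second application of Theorem~\ref{Thm:G-Part} inside $V_1$, and the range $3\le t\le 17$ when both between-colors genuinely occur and $V_1$ itself contains both red and blue edges; $R(3,3)=6$ and $R(3,4)=9$ are the natural tools for bounding the number of parts in those sub-configurations.
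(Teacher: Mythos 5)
Note first that the paper does not prove Theorem~\ref{Thm:Main}; it is quoted from \cite{LMSSS17}, so there is no in-paper argument to compare against. Judged on its own, your lower bound is complete and correct: the substitution of $\chi_{k-2}$ (relabelled to colors $3,\dots,k$) into a $K_4$-free $2$-coloring of $K_{17}$ is the standard construction, your verification that the result is a Gallai coloring and $K_4$-free is sound, and the count $17f(k-2)=f(k)$ checks out in both parities.

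The upper bound, however, has a genuine gap: everything after ``the remaining work is to show that\dots'' is precisely the content of the theorem, and you do not carry it out. The reductions you do give are fine --- the reduced graph has no monochromatic $K_4$ so $t\le 17$, some part has order at least $f(k-2)+1$, and the individual lifting lemmas (a red edge in $V_1$ forbids red edges inside parts of $A$ and red edges of $R$ between parts of $A$, hence $A$ spans at most $3$ parts, etc.) are each correct. But the decisive step is to handle the configurations in which the large part $V_1$ contains edges of both reduced-graph colors, and in which several large parts interact; ``pushing these and similar observations through the surviving configurations'' is a statement of intent, not an argument. In the actual proof in \cite{LMSSS17} this case analysis occupies the bulk of a long paper and requires considerably more than $R(3,3)=6$ and $R(3,4)=9$; it is not a routine bookkeeping exercise that can be left to the reader. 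You have also correctly identified the two delicate regimes ($t=2$, and $3\le t\le 17$ with $V_1$ bichromatic in red/blue), which is a good sign that the skeleton is the right one, but identifying where the difficulty lies is not the same as resolving it. As written, the proposal establishes the lower bound and the frame of the upper bound, but not the theorem.
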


The landscape for finding monochromatic bipartite graphs is very different. Where non-bipartite monochromatic graphs yield exponential functions of the number of colors $k$, as seen in the previous results, bipartite monochromatic graphs yield linear functions of $k$ (see Theorem~\ref{Thm:Dichotomy} below). In this work, we therefore consider the Gallai-Ramsey question for bipartite graphs, particularly complete bipartite graphs. 
Given a bipartite graph $H$, let $s(H)$ be the order of the smallest part of the bipartition of $H$.
Our main tool, which allows us to prove several results for different classes of bipartite graphs, is the following reduction theorem.

\begin{theorem}\label{Lem:OnlySmall}
Given a bipartite graph $H$ and a positive integer $R \geq R(H, H)$, if every G-coloring of $K_{R}$ using only $3$ colors, in which all parts of a G-partition have order at most $s(H) - 1$, contains a monochromatic copy of $H$, then 
$$
gr_{k}(K_{3} : H) \leq R + (s(H) - 1)(k - 2).
$$
\end{theorem}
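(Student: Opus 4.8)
Write $s = s(H)$. The plan is to prove the bound by induction on $k$; more precisely, I would show that every Gallai coloring of $K_n$ using at most $j$ colors with $n \ge R + (s-1)(j-2)$ contains a monochromatic $H$, for every $2 \le j \le k$. (When proving an upper bound on $gr_k$ we may ignore the stipulation that all $k$ colors be used.) The base case $j = 2$ is immediate: $n \ge R \ge R(H,H)$ and a $2$-coloring has no rainbow triangle, so a monochromatic $H$ exists by definition of the Ramsey number. For the inductive step, $j \ge 3$, fix via Theorem~\ref{Thm:G-Part} a Gallai partition $V_1, \dots, V_m$ with $m \ge 2$ as small as possible, let $c_1, c_2$ be the (at most two) colors appearing on the reduced complete graph $K_m$, and order the parts so that $|V_1| \ge \cdots \ge |V_m|$.

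Suppose first that $|V_1| \le s - 1$, that is, all parts are small. Here I would discard every color other than $c_1, c_2$ by merging them into a single new color; merging colors can never create a rainbow triangle, and the merged color lies entirely inside the parts. As long as no copy of $H$ can be assembled from within-part edges alone --- which holds whenever some component of $H$ has more than $s - 1$ vertices, in particular whenever $H$ is connected --- this produces a Gallai $3$-coloring of $K_n$ for which $V_1, \dots, V_m$ is still a Gallai partition with all parts of order at most $s - 1$. Deleting vertices down to exactly $R$ (legitimate since $n \ge R$) only shrinks the parts, so the hypothesis of the theorem applies and yields a monochromatic $H$; this copy must use $c_1$ or $c_2$, hence it survives in the original coloring.

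Now suppose $|V_1| \ge s$. Put $N_i = \bigcup\{V_j : V_1 V_j \text{ has color } c_i\}$, so that $V(G) = V_1 \cup N_1 \cup N_2$. Since $K_{V_1, N_i}$ is a complete bipartite graph in color $c_i$, it contains a copy of $H$ as soon as its two sides have orders at least $s$ and at least $b$, where $b$ denotes the order of the larger part of $H$. Consequently, unless a monochromatic $H$ is already present, $V(G) \setminus V_1 = N_1 \cup N_2$ must be small --- of order at most $2(s-1)$ once $|V_1| \ge b$, and of order less than $2b$ in general --- so $V_1$ is almost all of $K_n$. I would then pass to the coloring induced on $V_1$, first deleting $N_2$ (or all but the $c_1$-relevant vertices) when that lets one eliminate a color, and iterate; each step either removes a color, after which the inductive hypothesis finishes, or removes only a few vertices, until one reaches an instance on at least $R$ vertices with all parts small, or an instance with at most $2$ colors.

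The technical heart of the proof, and the step I expect to be the main obstacle, is organizing this descent in the large-part case so that the \emph{total} number of vertices deleted over all iterations never exceeds $(s-1)(k-2)$. Two points need real care. First, a part whose order lies strictly between $s$ and $b - 1$ is awkward: neither the complete-bipartite embedding above nor the color-merging of the small-part case applies to it directly, so one must either extract a monochromatic $H$ from a finer analysis of the reduced $2$-coloring (for example, from a monochromatic triangle, or a large connected monochromatic subgraph, of the reduced graph together with the orders of the parts it spans), or show that such a configuration can only occur when $n$ is already below the base-case threshold. Second, each step's vertex loss must be charged against a color that is genuinely eliminated, so that at most $k - 2$ effective steps occur; devising this charging scheme --- and, when $H$ is disconnected (for instance a linear forest) with every component of order at most $s - 1$, simultaneously ruling out or else exploiting copies of $H$ that straddle several small parts in a non-reduced color --- is where the remaining work lies.
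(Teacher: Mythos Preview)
Your plan is aligned with the paper's, and you have correctly located the two real difficulties; what is missing is their resolution, and the paper handles both at once without any iterative descent. First, the middle range $s \le |V_1| < b$ is eliminated outright by Lemma~\ref{Lem:Middle} (applied with $K_{s,b} \supseteq H$): in any G-coloring of $K_n$ with $n \ge 3b-2$ and no monochromatic $K_{s,b}$, the largest part of \emph{every} G-partition has order either at most $s-1$ or at least $n - 2s + 2$. Second, rather than peeling off large parts one at a time and tracking a vertex budget against eliminated colors, the paper greedily extracts in a single pass a maximal ordered set $T = (v_1, v_2, \dots)$ in which each $v_i$ has one color on all but at most $s-1$ of its edges to $G \setminus \{v_1, \dots, v_i\}$, the exceptions lying among $v_{i+1}, \dots, v_{i+s-1}$. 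Pigeonhole gives $|T| \le (s-1)k$, since $s$ vertices of $T$ sharing a dominant color would already form the small side of a monochromatic $K_{s,b}$. Maximality of $T$ combined with Lemma~\ref{Lem:Middle} then forces every G-partition of $G \setminus T$ to have only small parts: the large-part alternative ($\ge |G\setminus T| - 2s + 2$) would let you append the $\le 2s-2$ outside vertices to $T$. Hence $|G \setminus T| \ge R$ with all parts of order $\le s-1$; recoloring within-part edges to a single third color produces the $3$-coloring to which the hypothesis applies.

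Your side worry about disconnected $H$ whose components all have order at most $s-1$ is legitimate: a monochromatic $H$ in the merged third color would not transfer back to the original coloring, and the paper does not address this explicitly. In the paper's applications the issue does not bite (for $K_{\ell,m}$ and for $H$ with $s(H)=2$ the parts are too small to host any component), but it is a fair caveat to record for the general statement.
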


Essentially, this result says that if we intend to prove that
$$
gr_{k}(K_{3} : H) \leq R + (s(H) - 1)(k - 2),
$$
then it suffices to prove that every G-coloring of $K_{R}$ using only $3$ colors, in which all parts of a G-partition are small, contains a monochromatic copy of $H$.


The remainder of this paper is organized as follows. Section~\ref{Sec:Prelim} contains several known useful results along with some helpful lemmas which will be used later. Section~\ref{Sec:Thm4} contains the proof of Theorem~\ref{Lem:OnlySmall}. The remaining sections use Theorem~\ref{Lem:OnlySmall} to produce Gallai-Ramsey results for certain classes of bipartite graphs except Section~\ref{Sec:Conj}, which contains our broad conjecture of the value for all bipartite graphs.

\section{Preliminaries}\label{Sec:Prelim}

We first state some known classical $2$-color Ramsey numbers for complete bipartite graphs.

\begin{theorem}[\cite{B83}] \label{Thm:RK23}
$R(K_{2,3},K_{2,3})=10$.
\end{theorem}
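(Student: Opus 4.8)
The plan is to prove both inequalities $R(K_{2,3},K_{2,3})\ge 10$ and $R(K_{2,3},K_{2,3})\le 10$, using throughout the elementary reformulation that a graph contains $K_{2,3}$ as a subgraph if and only if some pair of its vertices has at least three common neighbors. Consequently, a $2$-coloring of $K_n$ avoids a monochromatic $K_{2,3}$ exactly when its edge set splits into a red graph $R$ and a blue graph $B$ in each of which \emph{every pair of vertices has at most two common neighbors}; call such graphs $K_{2,3}$\emph{-free}. The lower bound then amounts to exhibiting one such splitting of $K_9$, and the upper bound to showing that no such splitting of $K_{10}$ exists.

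For the lower bound I would take $R$ to be the $3\times 3$ rook's graph on the nine cells of a $3\times 3$ grid, with two cells adjacent precisely when they share a row or a column. This graph is strongly regular with parameters $(9,4,1,2)$: adjacent cells have exactly one common neighbor and non-adjacent cells exactly two, so no pair has three common neighbors and $R$ is $K_{2,3}$-free. Its complement, in which two cells are adjacent exactly when they differ in both coordinates, is again a strongly regular $(9,4,1,2)$ graph (in fact isomorphic to the rook's graph), hence also $K_{2,3}$-free. Coloring $K_9$ with $R$ red and its complement blue therefore yields no monochromatic $K_{2,3}$, giving $R(K_{2,3},K_{2,3})\ge 10$.

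For the upper bound, suppose toward a contradiction that $K_{10}$ is $2$-colored with $R$ and $B$ both $K_{2,3}$-free. Counting cherries (paths on three vertices) in each color, the two-common-neighbor condition gives $\sum_v\binom{d_R(v)}{2}\le 2\binom{10}{2}=90$ and likewise for blue; by convexity this forces $e(R),e(B)\le 23$, and since $e(R)+e(B)=45$ the split must be $\{22,23\}$ with very nearly regular color degrees. A local count sharpens this: if a vertex $v$ had red degree at least $7$, then every other vertex has at most two red neighbors in $N_R(v)$, so the red graph on $N_R(v)$ has maximum degree at most $2$ and the blue graph on $N_R(v)$ has large minimum degree; on $7$ or more vertices this forces a blue $K_{2,3}$ (the $7$-vertex boundary case would require a strongly regular $(7,4,2,2)$ graph, which does not exist, and larger cases violate the cherry bound outright). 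Thus every color degree lies in $\{3,4,5,6\}$, pinning the configuration to a narrow, near-regular regime.

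The main obstacle is closing this final regime, where the global counting has a one-edge slack and so cannot by itself yield a contradiction: the cherry inequalities, the identity $\sum_v\big(\binom{d_R(v)}{2}+\binom{d_B(v)}{2}\big)=360-\sum_v d_R(v)d_B(v)$, and the mixed-neighbor bound $\sum_v d_R(v)d_B(v)\ge 180$ are all mutually consistent with a putative $(23,22)$ split. The crux is therefore genuinely structural rather than enumerative: one must show that no $K_{2,3}$-free graph on $10$ vertices admits a $K_{2,3}$-free complement. I would attack this by determining $\mathrm{ex}(10,K_{2,3})$ exactly—if it equals $22$, the split $\{22,23\}$ is impossible and we are done at once—and otherwise by classifying the $23$-edge extremal graphs (which the local bounds force to have maximum degree at most $6$, and which the cherry count forces to realize two common neighbors on nearly every pair, a stringent design-like condition) and checking that each of their complements contains two vertices with three common neighbors. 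This extremal step is where essentially all the difficulty resides; a short finite case analysis over the constrained degree sequences, or a computer verification, is the realistic way to finish.
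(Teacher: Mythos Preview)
The paper does not supply a proof of this statement; it is quoted as a known classical Ramsey number with a citation to \cite{B83}, so there is no in-paper argument to compare against.

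Your lower bound is correct and clean: the $3\times 3$ rook's graph is strongly regular with parameters $(9,4,1,2)$, its complement is isomorphic to it, and neither contains two vertices with three common neighbors, so the resulting $2$-coloring of $K_9$ has no monochromatic $K_{2,3}$.

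Your upper bound, however, is not a proof but a plan with an explicitly deferred step. The cherry count $\sum_v\binom{d(v)}{2}\le 2\binom{10}{2}=90$ and convexity do give $e(R),e(B)\le 23$, and your local argument ruling out colour-degree $\ge 7$ can be made to work (for red degree $7$, the red graph on the neighbourhood has maximum degree $\le 2$, forcing the blue complement there to have $\ge 14$ edges on $7$ vertices; the only $2$-regular red possibilities are $C_7$ and $C_3\cup C_4$, and in both cases the complement already contains a $K_{2,3}$). But after narrowing to the near-regular $\{22,23\}$ split you stop, saying the remaining structural step ``is where essentially all the difficulty resides'' and proposing to finish by computing $\mathrm{ex}(10,K_{2,3})$ or by case analysis/computer check. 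Neither is carried out. Note too that the hoped-for shortcut $\mathrm{ex}(10,K_{2,3})=22$ is not delivered by the cherry bound alone (the degree sequence $5^64^4$ gives $\sum\binom{d_v}{2}=84\le 90$), so even that route requires its own nontrivial argument. As written, the hard half of the theorem is outlined but not proved.
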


\begin{theorem}[\cite{HM91}] \label{Thm:RK33}
$R(K_{3, 3}, K_{3, 3}) = 18$.
\end{theorem}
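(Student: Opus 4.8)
The plan is to prove the two inequalities $R(K_{3,3},K_{3,3}) \ge 18$ and $R(K_{3,3},K_{3,3}) \le 18$ separately. For the lower bound I would exhibit a red/blue coloring of $K_{17}$ with no monochromatic $K_{3,3}$, taking the \emph{Paley coloring} on vertex set $\mathbb{Z}_{17}$: color the edge $\{i,j\}$ red if $i-j$ is a nonzero quadratic residue modulo $17$ and blue otherwise. Since $17 \equiv 1 \pmod 4$, the residue set $\{1,2,4,8,9,13,15,16\}$ is symmetric, so the coloring is well defined, and multiplication by a fixed nonresidue is an isomorphism from the red graph (the Paley graph $P_{17}$) onto the blue graph; hence it suffices to check that $P_{17}$ alone is $K_{3,3}$-free. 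The key observation is that a graph contains $K_{3,3}$ exactly when some $3$-set of vertices has at least $3$ common neighbors. Using the vertex-transitivity of $P_{17}$ together with its multiplier automorphisms (multiplication by the quadratic residues), the verification reduces to a handful of orbit representatives of triples, each of which is checked to have at most two common neighbors, giving $R(K_{3,3},K_{3,3}) \ge 18$.

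For the upper bound I would assume a red/blue coloring of $K_{18}$ in which both color classes are $K_{3,3}$-free and derive a contradiction. The first tool is a double count: in any $K_{3,3}$-free graph every $3$-set has at most two common neighbors, so counting the pairs $(S,v)$ with $S$ a $3$-set and $v$ adjacent to all of $S$ from the two sides yields $\sum_v \binom{d_v}{3} \le 2\binom{18}{3}$ in each color, where $d_v$ is the degree of $v$ in that color. Writing $r_v$ and $b_v = 17 - r_v$ for the red and blue degrees and adding the two inequalities constrains the admissible degree sequences. The second, more powerful, tool is a neighborhood reduction. For any vertex $v$ the red graph induced on the red neighborhood $N_{\mathrm{red}}(v)$ must itself be $K_{2,3}$-free: a red $K_{2,3}$ there, with $3$-side $\{b_1,b_2,b_3\}$, together with $v$ (which is red-adjacent to each $b_i$) would complete a red $K_{3,3}$. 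Consequently, if $|N_{\mathrm{red}}(v)| \ge 10 = R(K_{2,3},K_{2,3})$, then by Theorem~\ref{Thm:RK23} the coloring on $N_{\mathrm{red}}(v)$ contains a monochromatic $K_{2,3}$, which is therefore forced to be blue.

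I would combine these observations (and their blue mirrors) to pin down the local structure around vertices of large monochromatic degree. The main obstacle is the \emph{near-regular} regime, in which every vertex has both color degrees in $\{8,9\}$: here no monochromatic neighborhood reaches size $10$, so Theorem~\ref{Thm:RK23} cannot be invoked directly, and the global double count still leaves considerable slack. Closing this regime is the crux. In it one must analyze the coloring induced on each $8$- or $9$-vertex monochromatic neighborhood, where one color is forced to be $K_{2,3}$-free and hence sparse while its complement must nonetheless avoid $K_{3,3}$, and then track the codegrees $\lambda(u,v) = |N(u) \cap N(v)|$ to locate a third common neighbor of some monochromatic $K_{2,3}$ and thereby produce the forbidden $K_{3,3}$. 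I expect this final step to require a detailed case analysis over the few surviving degree sequences, as in the original argument of Harborth and Mengersen~\cite{HM91}, rather than a single clean inequality.
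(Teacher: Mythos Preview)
The paper does not prove Theorem~\ref{Thm:RK33}; it is stated in the Preliminaries (Section~\ref{Sec:Prelim}) as a known classical result with a citation to \cite{HM91} and is then used as a black box in the proofs of Theorems~\ref{Thm:K33} and~\ref{Thm:K3m}. There is therefore no proof in the paper to compare your proposal against.

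For what it is worth, your outline is a reasonable reconstruction of how such a result is typically established: an explicit $17$-vertex $2$-coloring for the lower bound, and for the upper bound a combination of the codegree double count $\sum_v \binom{d_v}{3} \le 2\binom{18}{3}$ with the observation that a monochromatic $K_{2,3}$ inside a same-color neighborhood completes to a $K_{3,3}$. You correctly flag the near-regular regime (all degrees in $\{8,9\}$) as the case that resists these inequalities and requires the detailed structural analysis of \cite{HM91}. One caution on the lower bound: the strongly regular parameters $(17,8,3,4)$ of $P_{17}$ control codegrees of pairs but not of triples, so the claim that no $3$-set has three common neighbors genuinely requires the orbit computation you describe (or a direct check), not just the parameter list.
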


\begin{theorem}[\cite{EHM91}] \label{Thm:ExooK2m}
If $4n-4 = 2^tk_1 \dots k_s$ with $t\geq 0$, and either $k_i=p_{i}^{r_i}+1$ where $p_i^{r_i}\equiv 3$ (mod 4) is a prime, or $k_i = 2(q_i^{u_i}+1)$ where $q_i^{u_i} \equiv 1$ (mod 4) is a prime power, then $R(K_{2,n},K_{2,n})\geq 4n-3$ if $t>0$, and $R(K_{2,n},K_{2,n})\geq 4n-4$ otherwise.
\end{theorem}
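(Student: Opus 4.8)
The goal is a construction. To prove $R(K_{2,n},K_{2,n})\ge 4n-3$ when $t>0$ it suffices to exhibit a red/blue coloring of $K_{4n-4}$ with no monochromatic $K_{2,n}$, and for the weaker bound $R(K_{2,n},K_{2,n})\ge 4n-4$ when $t=0$ a coloring of $K_{4n-5}$. A color class contains a $K_{2,n}$ exactly when some pair of vertices has $n$ common neighbors in that color, so the target in each case is a coloring in which every pair of vertices has at most $n-1$ common neighbors in each color.

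First I would reduce this two-color condition to a one-color condition. Writing $A$ for the adjacency matrix of a $d$-regular red graph on $N$ vertices, the number of common red neighbors of $x\neq y$ is $(A^2)_{xy}$, while for the complementary (blue) adjacency $\bar A=J-I-A$ a direct expansion gives
\[
\bar A^2=(N-2-2d)\,J+I+2A+A^2 .
\]
Hence off the diagonal the blue common-neighbor count differs from the red one only by the constant $N-2-2d$ together with a term $2A_{xy}$ recording the color of $xy$. Choosing $N=4n-4$ and $d=2n-3$ makes $N-2-2d=0$, so the two colors have common-neighbor counts that agree on non-red-edges and differ by exactly $2$ on red edges; controlling the red counts (with the slightly stronger bound $n-3$ on pairs joined by a red edge) therefore controls both colors. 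This reduces the problem to building a single $(2n-3)$-regular graph on $4n-4$ vertices whose pairwise common-neighbor counts are all at most $n-1$. A short averaging count shows the mean common-neighbor count is already close to $n$, so the construction must be tightly concentrated, essentially nearly strongly regular, and this is precisely why genuine algebraic input is needed rather than a random or greedy graph.

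For the construction I would encode the coloring as a symmetric $\pm1$ matrix $M$ (common neighbors being read off from $M^2$) and assemble $M$ from the factorization $4n-4=2^t k_1\cdots k_s$. Each admissible factor is exactly an order for which a Paley-type quadratic-residue matrix exists: a factor $k_i=p_i^{r_i}+1$ with $p_i^{r_i}\equiv 3\pmod 4$ supplies the skew-Hadamard (doubly regular tournament) core on $\mathbb{F}_{p_i^{r_i}}$, a factor $k_i=2(q_i^{u_i}+1)$ with $q_i^{u_i}\equiv 1\pmod 4$ supplies a symmetric conference matrix of order $q_i^{u_i}+1$ doubled to a Hadamard matrix of order $2(q_i^{u_i}+1)$, and $2^t$ supplies an order-$2^t$ Hadamard block. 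I would then combine these blocks through a product operation on their $\pm1$ matrices so that $M$ inherits the exact quadratic identity $M^2\approx N I$ from the blockwise conference/Hadamard relations, equivalently from the quadratic-character sum $\sum_{z}\chi(x-z)\chi(z-y)=-1$ valid for $x\neq y$. The product must be designed so that pairs of vertices agreeing in one coordinate do not accumulate too many common neighbors; the naive Kronecker product fails here, since its cross terms inflate some common-neighbor counts to order $\sqrt{N}$, so finding the correct product is the central design issue.

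Verification is the main obstacle. Translating the controlled off-diagonal entries of $M^2$ (equivalently of $A^2$) through the identity above must yield common-neighbor counts bounded by $n-1$ in both colors, with the extremal value attained, which is what makes the bound sharp. The difficulty is to show that the blockwise quadratic identities survive the product with off-diagonal error terms small enough to push the maximum down to $n-1$ rather than leaving it near the average value $\approx n$; I expect this to require exploiting the regularity of the Paley blocks (exact character-sum evaluations) rather than generic eigenvalue estimates. Finally, the role of $2^t$ is to govern the vertex count: the base product built from $k_1\cdots k_s$ is realizable only on $4n-5$ vertices, giving $R\ge 4n-4$ and covering the case $t=0$, whereas an available factor of $2$ ($t\ge 1$) contributes one further coordinate that can be incorporated while preserving every common-neighbor bound, upgrading the construction to $4n-4$ vertices and hence to $R\ge 4n-3$. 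This accounts for the two cases in the statement.
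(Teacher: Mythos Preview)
The paper does not prove this theorem. Theorem~\ref{Thm:ExooK2m} is quoted verbatim from the reference \cite{EHM91} and is listed in Section~\ref{Sec:Prelim} among the ``known classical $2$-color Ramsey numbers for complete bipartite graphs''; no argument for it appears anywhere in the present paper, and indeed the theorem is never invoked in any of the subsequent proofs. There is therefore nothing in the paper to compare your proposal against.

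As for the proposal itself: your reduction is correct (the identity $\bar A^2=(N-2-2d)J+I+2A+A^2$ and the choice $N=4n-4$, $d=2n-3$ are fine), and the general strategy of building the coloring from Paley/conference/Hadamard blocks according to the factorization of $4n-4$ is the right one and is essentially what \cite{EHM91} does. But you have not actually carried out the construction: you explicitly flag that ``finding the correct product is the central design issue'' and that ``verification is the main obstacle,'' and you do not specify the product or verify the common-neighbor bounds. So what you have written is an informed outline rather than a proof; to complete it you would need to reproduce the explicit block product from \cite{EHM91} and check the resulting $(A^2)_{xy}$ entries case by case.
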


We will also use the following result concerning monochromatic stars.

\begin{theorem}[\cite{GS04}] \label{Thm:GS04}
Any Gallai coloring of $K_n$ contains a monochromatic star $S_t$ with $t\geq \frac{2n}{5}$.
\end{theorem}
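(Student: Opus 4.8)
The plan is to read a monochromatic $S_t$ as a vertex incident to at least $t$ edges of a single color, so that the statement becomes: every Gallai coloring of $K_n$ has a vertex of monochromatic degree at least $2n/5$. First I would invoke the Gallai partition of Theorem~\ref{Thm:G-Part} to obtain parts $V_1,\dots,V_m$ with $m\ge 2$ whose reduced complete graph is $2$-colored, say in red and blue, and set $w_i=|V_i|$, so that $\sum_i w_i=n$. For a part $V_i$, let $r_i$ and $b_i$ denote the total weight of the parts joined to $V_i$ by red and by blue reduced edges respectively, so $r_i+b_i=n-w_i$. Since all edges between two parts carry one color, any $v\in V_i$ has red-degree at least $r_i$ and blue-degree at least $b_i$, whence, using only edges leaving $V_i$,
\[
d_{\mathrm{mono}}(v)\ \ge\ \max(r_i,b_i)\ \ge\ \frac{r_i+b_i}{2}\ =\ \frac{n-w_i}{2}.
\]

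This single averaging bound disposes of any part with $w_i\le n/5$, since then some $v\in V_i$ has monochromatic degree at least $(n-n/5)/2=2n/5$. In particular, if $m\ge 5$ then $\min_i w_i\le n/m\le n/5$ and we are finished; so I may assume $m\le 4$ and that every part satisfies $w_i>n/5$.

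It remains to treat $m\in\{2,3,4\}$ under the assumption that all weights exceed $n/5$. When $m=2$ the two parts are joined by a single color, so a vertex in the smaller part has monochromatic degree $\max(w_1,w_2)\ge n/2$. When $m\in\{3,4\}$ the key observation is that the reduced $2$-coloring must contain a \emph{monochromatic cherry}, that is, a part $V_i$ joined to two distinct parts $V_j,V_k$ by reduced edges of the same color: for $m=3$ two of the three reduced edges share a color and meet at a common vertex, while for $m=4$ neither color class can be a matching (two matchings cover at most $4<6$ edges of $K_4$), forcing some vertex to be incident to two equally-colored edges. A vertex $v\in V_i$ is then joined monochromatically to all of $V_j\cup V_k$, so $d_{\mathrm{mono}}(v)\ge w_j+w_k>n/5+n/5=2n/5$, as required.

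The only real content is the reduction to the weighted reduced graph together with the averaging inequality $\max(r_i,b_i)\ge(n-w_i)/2$; the remaining analysis is short precisely because the hypothesis $w_i>n/5$ forces every pair of parts to be heavy. I expect the one point needing care to be the case $m=4$, where a balanced red/blue splitting of a balanced $K_4$ must be excluded, which is exactly what the matching argument above accomplishes. Finally, it is worth noting that the bound is sharp already at $n=5$: the pentagon coloring of $K_5$ (red and blue $5$-cycles) gives every vertex monochromatic degree exactly $2=2n/5$.
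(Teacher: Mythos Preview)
The paper does not supply a proof of this statement: Theorem~\ref{Thm:GS04} is quoted from \cite{GS04} as a preliminary result and is not re-proved (indeed, it is not even invoked in any of the subsequent arguments), so there is no in-paper proof to compare against.

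That said, your argument is correct and essentially the standard one. The reduction to the weighted $2$-colored reduced graph via Theorem~\ref{Thm:G-Part}, the averaging bound $\max(r_i,b_i)\ge (n-w_i)/2$ disposing of any part with $w_i\le n/5$, and the short case analysis for $m\in\{2,3,4\}$ under the assumption $w_i>n/5$ are all sound. The only step worth spelling out more carefully is the one you flagged yourself: for $m=4$, since a perfect matching of $K_4$ has two edges, two matchings cover at most four of the six edges, so some color class contains a path of length two and hence a monochromatic cherry in the reduced graph. Your sharpness remark is also correct; more generally, a balanced blow-up of the $2$-colored pentagon shows the constant $2/5$ cannot be improved.
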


Our first lemma provides the lower bound for our results on the Gallai-Ramsey number for bipartite graphs. Indeed, we believe this lower bound to be sharp for all connected bipartite graphs (see Conjecture~\ref{Conj:GR-Bip}).

\begin{lemma}\label{Lem:BipLowBnd}
For a given connected bipartite graph $H$ with Ramsey number $R(H, H) = R$, we have
$$
gr_{k}(K_{3} : H) \geq R + (s(H) - 1)(k - 2).
$$
\end{lemma}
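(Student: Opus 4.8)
The plan is to construct, for each $k \ge 2$, a $k$-coloring of the complete graph on $N := R + (s(H)-1)(k-2) - 1$ vertices that is a Gallai coloring and contains no monochromatic copy of $H$; this witnesses $gr_k(K_3 : H) > N$, which is the desired bound. I would build the coloring recursively on the number of colors, using the ``substitution'' or ``blow-up'' operation that preserves the rainbow-triangle-free property: if $G_1$ is a Gallai coloring and we replace one vertex of a $2$-colored (or $1$-colored) complete graph by $G_1$, the result is again a Gallai coloring. The base case $k=2$ is exactly a Ramsey extremal coloring: since $R(H,H) = R$, there is a $2$-coloring of $K_{R-1}$ with no monochromatic $H$, and $N = R-1$ when $k=2$, so we are done there.

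For the inductive step, suppose we have a $(k-1)$-coloring of $K_{N'}$ with $N' = R + (s(H)-1)(k-3) - 1$ containing no monochromatic $H$ and no rainbow triangle. To produce a $k$-coloring on $N = N' + (s(H)-1)$ vertices, I take a new color $c_k$, form a set $A$ of $s(H)-1$ new vertices, color all edges inside $A$ and all edges between $A$ and the old graph with $c_k$, and keep the old coloring on the remaining $N'$ vertices. This is equivalent to substituting the old $(k-1)$-colored graph into one vertex of a $c_k$-monochromatic $K_{2}$-type construction (really just joining a monochromatic clique of size $s(H)-1$), so no rainbow triangle is created: any triangle either lies entirely in the old graph, lies entirely in $A$, or has at least two of its edges incident to $A$ and hence colored $c_k$. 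It remains to check there is no monochromatic $H$. In colors $1, \dots, k-1$ the host of any such copy would have to avoid $A$ entirely (since all edges at $A$ are color $c_k$), hence lie in the old graph, which by induction has no monochromatic $H$ in those colors. In color $c_k$, the only edges are those inside $A$ and those between $A$ and the rest; a copy of $H$ in color $c_k$ would have to fit in the ``complete split graph'' with clique part $A$ of size $s(H)-1$ and the rest an independent set (in color $c_k$). But $H$ is connected and bipartite, so in any such host every vertex of $H$ on the ``independent'' side has all its neighbors in $A$; since $|A| = s(H)-1 < s(H) \le $ the size of \emph{each} part of any bipartition of $H$ that could be realized here, one checks that $H$ does not embed --- more carefully, an embedding of a connected bipartite $H$ into this split graph forces one bipartition class of $H$ into $A$, which is impossible as that class has size at least $s(H) > |A|$.

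The main obstacle I anticipate is precisely this last embedding argument: verifying that a connected bipartite graph $H$ with smallest part $s(H)$ cannot be embedded in the complete split graph whose clique has $s(H)-1$ vertices. The connectedness hypothesis is essential and must be used, since a disconnected bipartite graph (e.g. a matching) embeds easily. The key point is that in a proper $2$-coloring of $V(H)$, every edge of $H$ has one endpoint of each color; if $H$ embeds in the split graph, every edge of $H$ must have at least one endpoint in the clique $A$, so the complement of $A$'s preimage is an independent set of $H$, i.e. $A$'s preimage is a vertex cover. For a connected bipartite graph the minimum vertex cover equals the maximum matching (König), and this is at least $s(H)$ unless $H$ has a very small matching; the cleanest route is to observe that one of the two color classes of the (unique, by connectedness) bipartition of $H$ must map into $A$, forcing $|A| \ge s(H)$, a contradiction. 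I would state this as a short self-contained sublemma and then assemble the induction above.
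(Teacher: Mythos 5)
Your construction is the same as the paper's (iteratively attach $s(H)-1$ new vertices in a fresh color to a $2$-colored Ramsey-extremal $K_{R-1}$), and you are right that the only nontrivial verification is that the new color class --- a complete split graph with clique $A$ of size $s(H)-1$ joined to an independent set --- contains no copy of $H$. You correctly reduce this to showing that $H$ has no vertex cover of size at most $s(H)-1$. But your proposed justification of that step is wrong, and in fact the step itself is false for general connected bipartite $H$. An embedding of $H$ into the split graph forces the preimage of $A$ to be a vertex cover of $H$, not a bipartition class: an independent set of $H$ need not lie inside one side of the bipartition. Concretely, let $H$ be the double star with adjacent centers $u,v$ and three leaves at each center. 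Then $H$ is connected and bipartite with both parts of size $4$, so $s(H)=4$, yet $\tau(H)=2$: mapping $u,v$ into $A$ and the six leaves into the independent part embeds $H$ into the split graph even when $|A|=2 \le s(H)-1$. Hence already the third stage of your recursion contains a monochromatic copy of this $H$ in color $3$, and the construction does not establish the stated bound for such $H$. (The paper's own proof simply asserts that $G_{k}$ contains no monochromatic $H$ and has exactly the same gap.)

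What is true, by K\H{o}nig's theorem as you note, is that $H$ fails to embed in the split graph if and only if $\nu(H)=\tau(H)\ge s(H)$, i.e.\ if and only if $H$ has a matching saturating its smaller part. Under that additional hypothesis your argument (and the paper's) is complete, and the hypothesis does hold for every graph to which the paper actually applies the lemma: $K_{\ell,m}$ with $\ell\le m$ has $\tau=\ell=s$, and the linear forests $nP_{3}$ have $\nu=n=s$. So you should either add ``$H$ has a matching of size $s(H)$'' as a hypothesis of your self-contained sublemma, or find a genuinely different construction for graphs like double stars; as written, the final embedding claim cannot be repaired.
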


\begin{proof}
With $R = R(H, H)$, there exists a $2$-colored complete graph $G_{2}$ on $R - 1$ vertices containing no monochromatic copy of $H$. Given a colored complete graph $G_{i - 1}$, for each additional color $i$ with $3 \leq i \leq k$, we add $s(H) - 1$ vertices with all incident edges colored in color $i$ to create the graph $G_{i}$. The resulting graph $G_{k}$ is a $k$-colored complete graph on $R - 1 + (s(H) - 1)(k - 2)$ vertices containing no monochromatic copy of $H$ and no rainbow triangle.
\end{proof}

Note that this construction, and therefore this lower bound, does not suffice if the monochromatic graph $H$ is not bipartite. Indeed, it is easy to show that if $H$ is not bipartite, then $gr_{k}(K_{3} : H)$ grows as an exponential function of $k$, stated more precisely in the following result.

\begin{theorem}[\cite{GSSS10}]\label{Thm:Dichotomy}
Let $H$ be a fixed graph with no isolated vertices.  If $H$ is not bipartite, then $gr_{k}(K_{3} : H)$ is exponential in $k$.   If $H$ is bipartite, then $gr_{k}(K_{3} : H)$ is linear in $k$.
\end{theorem}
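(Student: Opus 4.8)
The plan is to prove the two implications separately, in each case exhibiting matching bounds. For a non-bipartite $H$ I will show that $gr_{k}(K_{3} : H)$ lies between two exponential functions of $k$, and for a bipartite $H$ I will show that it is bounded above by a linear function of $k$ (with the matching linear lower bound coming, for connected $H$, from Lemma~\ref{Lem:BipLowBnd}). The only genuinely new ingredient is a recursive doubling construction for the non-bipartite lower bound; the remaining bounds reduce to results already available, namely Theorem~\ref{Lem:OnlySmall}, Theorem~\ref{Thm:GS04}, and the (known) exponential bounds for monochromatic cliques.

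For the exponential lower bound when $H$ is not bipartite, I would build a sequence of Gallai colorings by doubling: set $G_{0} = K_{1}$, and given $G_{j-1}$ let $G_{j}$ consist of two disjoint copies of $G_{j-1}$ together with all edges between the two copies colored in a brand new color $j$. Then $|G_{j}| = 2^{j}$ and $G_{k}$ uses exactly the $k$ colors $1, \dots, k$. Two facts must be checked. First, $G_{k}$ is a Gallai coloring: any triangle either lies inside a single copy (where we induct) or has exactly two vertices in one copy, in which case two of its edges carry the join color, so it is not rainbow. Second, and this is the crucial point, every color class of $G_{k}$ is bipartite, since the edges of color $j$ form a vertex-disjoint union of complete bipartite graphs (one for each level-$j$ join occurring inside $G_{k}$) and hence contain no odd cycle. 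As a non-bipartite $H$ contains an odd cycle, no color class can contain a copy of $H$, so $gr_{k}(K_{3} : H) \ge 2^{k} + 1$. The matching exponential upper bound is immediate from the clique case: writing $p = |V(H)|$, every monochromatic $K_{p}$ contains $H$, so $gr_{k}(K_{3} : H) \le gr_{k}(K_{3} : K_{p})$, and the right-hand side is at most exponential in $k$ (obtained by iterating Theorem~\ref{Thm:G-Part} against the finite Ramsey number $R(p,p)$). Combining the two bounds shows that $gr_{k}(K_{3} : H)$ is exponential in $k$ whenever $H$ is not bipartite.

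For the linear upper bound when $H$ is bipartite with $s(H) \ge 2$, I would invoke Theorem~\ref{Lem:OnlySmall} with $R := (s(H) - 1)\, R(H,H) \ge R(H,H)$, so that it suffices to verify its hypothesis. In any Gallai $3$-coloring of $K_{R}$ whose G-partition has all parts of order at most $s(H) - 1$, there are at least $R/(s(H) - 1) = R(H,H)$ parts; the reduced complete graph on these parts uses at most two colors by Theorem~\ref{Thm:G-Part}, hence contains a monochromatic copy of $H$, and choosing one vertex from each relevant part lifts this to a monochromatic $H$ in $K_{R}$. Theorem~\ref{Lem:OnlySmall} then gives $gr_{k}(K_{3} : H) \le R + (s(H) - 1)(k - 2)$, which is linear in $k$, and for connected $H$ the matching linear lower bound is Lemma~\ref{Lem:BipLowBnd}. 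The remaining case $s(H) = 1$, in which $H$ is a forest of stars, I would treat directly: Theorem~\ref{Thm:GS04} extracts a large monochromatic star from any Gallai coloring, and iterating this yields a bound independent of (and thus certainly linear in) $k$.

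The step I expect to be the main obstacle is the exponential upper bound for $gr_{k}(K_{3} : K_{p})$. A naive Gallai-partition induction stalls here, because the parts of a G-partition need not use fewer than $k$ colors, so the color count does not automatically drop; one must therefore appeal to the known exponential upper bound for monochromatic cliques rather than reprove it. By contrast, the doubling construction and the lift argument verifying the hypothesis of Theorem~\ref{Lem:OnlySmall} are routine once set up, so the heart of the dichotomy is the single observation that doubling produces Gallai colorings in which \emph{every} color class is bipartite.
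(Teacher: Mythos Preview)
The paper does not actually prove Theorem~\ref{Thm:Dichotomy}; it is quoted from \cite{GSSS10} as background and no argument is given here. So there is no in-paper proof to compare against, and your proposal stands on its own.

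Your argument is essentially correct. The doubling construction for the non-bipartite lower bound is clean and standard: each colour class is a disjoint union of complete bipartite graphs, so no odd cycle and hence no copy of a non-bipartite $H$, giving $gr_{k}(K_{3}:H)\ge 2^{k}+1$. For the non-bipartite upper bound, note that your parenthetical ``iterating Theorem~\ref{Thm:G-Part} against $R(p,p)$'' is exactly the step you yourself flag as problematic, since recursing into a part need not drop the number of colours; the honest route is simply $gr_{k}(K_{3}:K_{p})\le R_{k}(K_{p})$, the ordinary $k$-colour Ramsey number, which is at most exponential in $k$ by the classical neighbourhood-chasing bound. That closes the non-bipartite half without circularity.

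For the bipartite upper bound your use of Theorem~\ref{Lem:OnlySmall} with $R=(s(H)-1)\,R(H,H)$ is valid and in fact mirrors the paper's own proof of Theorem~\ref{Thm:GenBound} (there stated only for $K_{\ell,m}$): at least $R(H,H)$ parts force a monochromatic $H$ in the $2$-coloured reduced graph, which lifts. Two small corrections: first, $s(H)=1$ forces $H$ to be a \emph{single} star, not a forest of stars (each component of a bipartite graph with no isolated vertices contributes at least one vertex to each side, so $s(H)\ge$ the number of components); Theorem~\ref{Thm:GS04} then gives a bound independent of $k$. Second, be aware that you are invoking this paper's main tool (Theorem~\ref{Lem:OnlySmall}) to reprove a result that logically precedes it in the exposition; there is no circularity since the proof of Theorem~\ref{Lem:OnlySmall} does not use Theorem~\ref{Thm:Dichotomy}, but it is worth noting that the original proof in \cite{GSSS10} necessarily proceeds differently.
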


The following technical lemma will be used to eliminate many cases from our main results. It then suffices to consider only two possibilities: when a largest part of a G-partition is rather small or when this largest part is rather large.

\begin{lemma}\label{Lem:Middle}
Given positive integers $\ell, m, n$ where $\ell \leq m$ and $n \geq 3m - 2$, let $H = K_{\ell, m}$ be a complete bipartite graph and let $G$ be a G-coloring of $K_{n}$ with no monochromatic copy of $H$. In any G-partition of $G$, any largest part of the partition has order either at most $\ell - 1$ or at least $n - 2\ell + 2$.
\end{lemma}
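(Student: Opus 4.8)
The plan is to argue by contradiction: suppose a largest part $A$ of a G-partition of $G$ has order strictly between $\ell-1$ and $n-2\ell+2$, that is, $\ell \le |A| \le n - 2\ell + 1$. The key object to exploit is the \emph{reduced graph} $R$ on the parts of the G-partition, which (by Theorem~\ref{Thm:G-Part}) is $2$-colored, say with colors red and blue. Since $A$ is a largest part and $|A| \ge \ell$, we want to find $m$ other parts joined to $A$ by edges all of one color in the reduced graph; together with $A$ these would yield a monochromatic $K_{\ell,m}$ (taking $\ell$ vertices from $A$ on one side and one vertex from each of the $m$ parts on the other side — this works because any $m$ parts each joined to $A$ in, say, red, together with $A$ contain $K_{\ell,m}$ as long as $|A|\ge \ell$, regardless of how large those other parts are).

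The first step is therefore to bound the number of vertices outside $A$ that lie in parts joined to $A$ in red, versus in blue. Let $B$ be the set of vertices in parts joined to $A$ in red and $C$ the set in parts joined to $A$ in blue, so $|B| + |C| = n - |A| \ge n - (n-2\ell+1) = 2\ell - 1$. The second step is to observe that if $B$ meets at least $m$ parts then we get a red $K_{\ell,m}$ and are done; likewise for $C$ and blue. So we may assume $B$ meets at most $m-1$ parts and $C$ meets at most $m-1$ parts. The third step is the crucial counting: since $A$ is a \emph{largest} part, every part has order at most $|A| \le n - 2\ell + 1$, but more usefully we should compare against a \emph{lower} bound. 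Actually the cleaner route: each of $B$ and $C$ is a union of at most $m-1$ parts, each of size at most $|A|$; but we also know, combining with the $H$-free hypothesis applied \emph{inside} the colored complete graphs $G[B]$ and $G[C]$ (which are themselves Gallai colorings), that $|B|, |C| < R(H,H)$ would give more, though the simplest bound is via the reduced graph directly.

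The main obstacle I expect is making the counting tight enough to reach a contradiction with the hypothesis $n \ge 3m-2$ rather than a weaker bound. The delicate point is that the at-most-$(m-1)$ parts in $B$ could individually be large (up to $|A|$), so bounding $|B|$ by $(m-1)|A|$ is useless. Instead, the argument should run: pick $A$ largest; among the parts joined to $A$, if \emph{either} color class (of reduced-graph edges at $A$) spans $\ge m$ parts we are done, so each spans $\le m-1$ parts, whence the reduced graph has $\le 1 + (m-1) + (m-1) = 2m-1$ parts total, so $n \le (2m-1)|A|$. Combined with $|A| \le n-2\ell+1$ this is still not immediate, so the finishing move must instead recurse: the largest part $A$ being of ``middle'' size forces $G[A]$ or $G[V\setminus A]$ to contain enough structure. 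I would set this up so that removing $A$ leaves $n - |A| \ge 2\ell - 1$ vertices forming a Gallai coloring with no monochromatic $K_{\ell,m}$, then iterate the largest-part analysis there, and show the sizes cannot all stay in the ``middle'' range without eventually forcing $m$ parts of one color at $A$, contradicting $H$-freeness. Getting the bookkeeping in this iteration to close exactly at the threshold $n \ge 3m-2$ is the technical heart of the proof.
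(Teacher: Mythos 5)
Your proposal does not close the argument: it ends by gesturing at an iteration whose ``bookkeeping'' you admit you have not carried out, and the reason you get stuck is a wrong turn at the second step. You insist that the set $B$ of vertices joined to $A$ in red must meet at least $m$ \emph{parts} before you can extract a red $K_{\ell,m}$. That is needlessly strong: since $K_{\ell,m}$ has no edges inside its two sides, you only need $m$ \emph{vertices} outside $A$ all joined to $A$ in red (they may all lie in a single part), together with $\ell$ vertices of $A$. Once you count vertices instead of parts, the whole difficulty you describe --- that the few parts in $B$ or $C$ could individually be huge --- evaporates, and no recursion on $G[V\setminus A]$ is needed.

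The paper's proof is a two-line pigeonhole split into two cases on $|A|$. If $|A| \leq n-2m+1$, there are at least $2m-1$ vertices outside $A$, each with a single color to $A$, so some color is used by at least $m$ of them; with $|A|\geq \ell$ this gives a monochromatic $K_{\ell,m}$ ($\ell$ vertices inside $A$, $m$ outside). If instead $n-2m+2 \leq |A| \leq n-2\ell+1$, then there are at least $2\ell-1$ vertices outside $A$, so some color is used by at least $\ell$ of them toward $A$; and the hypothesis $n \geq 3m-2$ gives $|A| \geq n-2m+2 \geq m$, so now the $m$-side sits inside $A$ and the $\ell$-side outside. This is exactly where the threshold $n \geq 3m-2$ enters --- not in any iterative bookkeeping. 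You should discard the reduced-graph/part-counting framework for this lemma and redo it as this direct vertex count.
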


\begin{proof}
Let $H$ and $G$ be as given and let $H_{1}$ be a largest part of a G-partition of $G$. Suppose, for a contradiction, that $\ell \leq |H_{1}| \leq n - 2\ell + 1$.

First suppose that $|H_{1}| \leq n - 2m + 1$. Since $H_{1}$ is a part of a G-partition, every other vertex of $G$ has one of only two colors on all edges to $H_{1}$. By the pigeonhole principle, there must be at least $m$ vertices outside $H_{1}$, say a set $S$, with all one color on edges to $H_{1}$. Since $|H_{1}| \geq \ell$, $S \cup H$ contains a monochromatic copy of $K_{\ell, m}$, a contradiction.

Finally suppose that $n - 2m + 2 \leq |H_{1}| \leq n - 2\ell + 1$. This means that there are at least $2\ell - 1$ vertices in $G \setminus H_{1}$. Since $n \geq 3m - 2$, we get $|H_{1}| \geq n - 2m + 2 \geq m$. By the pigeonhole principle, there is a set $S$ of at least $\ell$ vertices in $G \setminus H_{1}$ with the property that all edges between $S$ and $H_{1}$ have the same color. Then $S \cup H_{1}$ induces a monochromatic graph containing $K_{\ell, m}$, a contradiction completing the proof of Lemma~\ref{Lem:Middle}.
\end{proof}


\section{Proof of Theorem~\ref{Lem:OnlySmall}}\label{Sec:Thm4}

Recall that Theorem~\ref{Lem:OnlySmall} stated as follows. Given a bipartite graph $H$ and a given positive integer $R \geq R(H, H)$, if we intend to prove that
$$
gr_{k}(K_{3} : H) \leq R + (s(H) - 1)(k - 2),
$$
then it suffices to prove that every G-coloring of $K_{R}$ using only $3$ colors, in which all parts of the partition have order at most $s(H) - 1$, contains a monochromatic copy of $H$.

\begin{proof}
Let $H$ be the given bipartite graph with $a = s(H)$ and $b = |H| - a$. Let $G$ be a G-coloring of $K_{n}$ where $n = R + (a - 1)(k - 2)$. Define a set $T$ of vertices $\{v_{1}, v_{2}, \dots, v_{|T|}\}$ to be a largest set with the property that each vertex $v_{i}$ has all except possibly at most $(a - 1)$ of its edges to $G \setminus \{v_{1}, v_{2}, \dots, v_{i}\}$ in a single color, with the extra restriction that if any of these edges has a different color, it must be an edge of the form $v_{i}v_{i + \ell}$ where $\ell \leq a - 1$. Note that $|T| \leq (a - 1)k$ since if $|T| \geq (a - 1)k + 1$, then there would exist a color, say $i$, such that at least $a$ vertices among the first $(a - 1)k + 1$ vertices of $T$ have all edges to the rest of the graph in color $i$. This yields a monochromatic $K_{a, b}$ in color $i$, which contains the desired bipartite graph $H$.

By Lemma~\ref{Lem:Middle}, the largest part of any G-partition of $G$ has order either at least $n - 2a + 2$ or at most $a - 1$. For the proof of this lemma, it suffices to suppose the largest part $H_{1}$ has order at least $n - 2a + 2$. Then the vertices of $G \setminus H_{1}$ can be added to $T$, contradicting the maximality of $|T|$.

By Theorem~\ref{Thm:G-Part}, there is a G-partition of $G \setminus T$, say using colors red and blue. We now replace any vertices of $T$ (if they exist) that have red or blue edges to $G \setminus T$, replace all edges within the parts of the G-partition with a third color, and let $G'$ be the resulting $3$-colored complete graph. Note that $|G'| \geq R$. Certainly there is a G-partition of $G'$ using colors red and blue with each part having order at most $a - 1$. Also note that a rainbow triangle and a monochromatic copy of $H$ can easily be avoided within $T$ so it suffices to consider the $3$-coloring $G'$ of order $R$.
\end{proof}




\section{Bipartite Graphs With $s(H)=2$}

Recall that $s(H)$ is the order of the smallest part of the bipartition of $H$. For general bipartite graphs $H$ with $s(H) = 2$, we obtain the following very broad result.

\begin{theorem}\label{Thm:sis2}
Let $H$ be a bipartite graph with $s(H) = 2$ and $R(H, H) = R$. Then for any integer $k \geq 2$, we have
$$
gr_{k}(K_{3} : H) = R + (k - 2).
$$
\end{theorem}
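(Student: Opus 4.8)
The lower bound is immediate from Lemma~\ref{Lem:BipLowBnd}, so the work is entirely in the upper bound $gr_k(K_3:H) \le R + (k-2)$. The plan is to invoke Theorem~\ref{Lem:OnlySmall} with this exact value of $R$: since $s(H) = 2$, we have $s(H) - 1 = 1$, so the reduction theorem tells us it suffices to show that every $G$-coloring of $K_R$ using only $3$ colors, in which every part of some $G$-partition has order at most $s(H) - 1 = 1$, contains a monochromatic copy of $H$. But ``every part has order at most $1$'' means every part is a single vertex, so the $G$-partition is the trivial partition into singletons. By Theorem~\ref{Thm:G-Part}, such a partition uses at most two colors on the edges between parts — but the edges between parts are simply all the edges of $K_R$. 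Hence the entire coloring of $K_R$ uses at most $2$ colors.

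So the entire problem reduces to a single $2$-colored complete graph on $R = R(H,H)$ vertices. First I would note that since $R = R(H, H)$, by the definition of the Ramsey number every $2$-coloring of $K_R$ contains a monochromatic copy of $H$. (One should double-check the trivial edge case: if $H$ has no edges the statement is degenerate, and the hypothesis $R \ge R(H,H)$ in Theorem~\ref{Lem:OnlySmall} is satisfied with equality here; also $k \ge 2$ ensures $k - 2 \ge 0$, so the claimed bound is at least $R$.) This immediately gives the monochromatic $H$ required by the hypothesis of Theorem~\ref{Lem:OnlySmall}, and therefore
$$
gr_k(K_3 : H) \le R + (s(H) - 1)(k - 2) = R + (k - 2).
$$
Combined with Lemma~\ref{Lem:BipLowBnd}, which gives $gr_k(K_3:H) \ge R + (k-2)$ (noting Lemma~\ref{Lem:BipLowBnd} is stated for connected bipartite $H$, so one may need to either restrict to connected $H$ or supply the analogous construction directly), we obtain equality.

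There is essentially no obstacle here: the theorem is a clean corollary of the reduction theorem, and the only ``content'' is the observation that when $s(H) = 2$ the small-parts hypothesis forces the $G$-partition into singletons and hence collapses the coloring to just $2$ colors, at which point the ordinary Ramsey number finishes the job. The one point requiring a moment's care is matching the hypotheses of Lemma~\ref{Lem:BipLowBnd} (connectedness) and of Theorem~\ref{Lem:OnlySmall} ($R \ge R(H,H)$, here taken with equality) to the generality of the statement of Theorem~\ref{Thm:sis2}; if $H$ is disconnected one repeats the construction of Lemma~\ref{Lem:BipLowBnd} directly, adding one vertex per new color in a fresh color, which still avoids a monochromatic $H$ since $s(H) = 2$ means a single new monochromatic vertex cannot complete a copy of $H$.
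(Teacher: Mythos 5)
Your proposal is correct and follows exactly the paper's own argument: lower bound from Lemma~\ref{Lem:BipLowBnd}, and for the upper bound apply Theorem~\ref{Lem:OnlySmall} with $s(H)-1=1$ so that all parts of the G-partition are singletons, the coloring collapses to a $2$-coloring of $K_R$, and the Ramsey number $R=R(H,H)$ finishes. Your side remark about the connectedness hypothesis in Lemma~\ref{Lem:BipLowBnd} is a fair point of care that the paper glosses over, but it does not change the approach.
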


\begin{proof}
The lower bound follows from Lemma~\ref{Lem:BipLowBnd}.
For the upper bound, let $G$ be a G-coloring of $K_{n}$ where $n = R+k-2$ containing no monochromatic copy of $K_{2, m}$. 
By Theorem~\ref{Lem:OnlySmall}, it suffices to consider a $3$-coloring $G'$ of $K_{R}$ with a G-partition in which all parts have order $1$. Since there are only two colors in the G-partition, this is, in fact, a $2$-coloring of $K_{R}$. By the definition of $R$, this contains a monochromatic $K_{2, m}$, a contradiction to complete the proof.
\end{proof}

Using Theorem~\ref{Lem:OnlySmall}, the proof of Theorem~\ref{Thm:sis2} and therefore we have the following corollary.

\begin{corollary}\label{Thm:K2m}
Let $R_{2, m} = R(K_{2, m}, K_{2, m})$. For $k \geq 2$ and $m \geq 3$, we have $gr_k (K_3 : K_{2,m}) = R_{2,m} + (k - 2)$.
\end{corollary}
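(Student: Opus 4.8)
The plan is to derive Corollary~\ref{Thm:K2m} as an immediate specialization of Theorem~\ref{Thm:sis2}. The key observation is that $K_{2,m}$ is a bipartite graph whose smaller bipartition part has exactly two vertices, so $s(K_{2,m}) = 2$ for every $m \geq 2$; in particular this holds for all $m \geq 3$. Setting $H = K_{2,m}$ and $R = R_{2,m} = R(K_{2,m}, K_{2,m})$, Theorem~\ref{Thm:sis2} applies verbatim and yields $gr_k(K_3 : K_{2,m}) = R_{2,m} + (k - 2)$ for every $k \geq 2$. So the proof is essentially one line invoking the more general theorem.

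If instead the intent is to reprove this directly (to make the corollary self-contained), I would reproduce the two-sided argument of Theorem~\ref{Thm:sis2} with $H = K_{2,m}$. For the lower bound, apply Lemma~\ref{Lem:BipLowBnd}: since $K_{2,m}$ is connected bipartite with $s(K_{2,m}) = 2$, we immediately get $gr_k(K_3 : K_{2,m}) \geq R_{2,m} + (k-2)$. For the upper bound, suppose $G$ is a G-coloring of $K_n$ with $n = R_{2,m} + (k-2)$ and no monochromatic $K_{2,m}$. Apply Theorem~\ref{Lem:OnlySmall} with $H = K_{2,m}$, $s(H) - 1 = 1$, and $R = R_{2,m}$: it then suffices to show that every $3$-coloring of $K_{R_{2,m}}$ admitting a G-partition all of whose parts have order $1$ contains a monochromatic $K_{2,m}$. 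But a G-partition into singleton parts uses only two colors between parts, so such a $3$-coloring is in fact a $2$-coloring of $K_{R_{2,m}}$, which by definition of the Ramsey number $R_{2,m} = R(K_{2,m}, K_{2,m})$ must contain a monochromatic $K_{2,m}$ — a contradiction.

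I do not anticipate a genuine obstacle here: the only thing to check is the hypothesis bookkeeping, namely that $R_{2,m} \geq R(K_{2,m}, K_{2,m})$ (it is equality, so fine) and that $s(K_{2,m}) = 2$ (immediate). The one subtlety worth a sentence is why the restriction $m \geq 3$ appears: for $m \leq 2$ the graph $K_{2,m}$ degenerates ($K_{2,1}$ is a path, $K_{2,2}$ is a $4$-cycle), the relevant Ramsey numbers are small or trivial, and the stated formula and/or the reduction machinery may need minor adjustment, so the clean statement is given only for $m \geq 3$.

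\begin{proof}
Since $K_{2,m}$ is a bipartite graph whose smaller bipartition class has exactly two vertices, $s(K_{2,m}) = 2$ for all $m \geq 3$. Applying Theorem~\ref{Thm:sis2} with $H = K_{2,m}$ and $R = R_{2,m} = R(K_{2,m}, K_{2,m})$ gives $gr_k(K_3 : K_{2,m}) = R_{2,m} + (k - 2)$ for every $k \geq 2$.
\end{proof}
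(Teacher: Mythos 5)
Your proposal is correct and matches the paper exactly: the paper states this result as an immediate corollary of Theorem~\ref{Thm:sis2}, obtained by specializing $H = K_{2,m}$ with $s(K_{2,m}) = 2$ and $R = R(K_{2,m}, K_{2,m})$, which is precisely your one-line argument. Your optional self-contained reproof also mirrors the paper's proof of Theorem~\ref{Thm:sis2} (lower bound from Lemma~\ref{Lem:BipLowBnd}, upper bound via Theorem~\ref{Lem:OnlySmall} reducing to a $2$-coloring of $K_{R_{2,m}}$), so there is nothing to add.
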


\section{Complete Bipartite Graph $K_{3, m}$}

For complete bipartite graphs $H$ with $s(H) = 3$, we obtain the following results.

\begin{theorem}\label{Thm:K33}
For $k \geq 3$, we have
$$
gr_{k}(K_{3} : K_{3, 3}) = 2k + 14.
$$
\end{theorem}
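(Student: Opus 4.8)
The plan is to apply Theorem~\ref{Lem:OnlySmall} with $H = K_{3,3}$, so $s(H) = 3$ and, by Theorem~\ref{Thm:RK33}, $R(H,H) = 18$. The lower bound $gr_k(K_3 : K_{3,3}) \geq 2k + 14 = 18 + 2(k-2)$ is immediate from Lemma~\ref{Lem:BipLowBnd} since $K_{3,3}$ is connected bipartite with $s(H) - 1 = 2$. For the upper bound, by Theorem~\ref{Lem:OnlySmall} it suffices to show that every $3$-colored Gallai coloring $G'$ of $K_{18}$ admitting a G-partition in which every part has order at most $2$ contains a monochromatic $K_{3,3}$. So the entire problem reduces to this finite, bounded-part statement about $K_{18}$.

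To analyze $G'$, I would take a G-partition (guaranteed by Theorem~\ref{Thm:G-Part}) using, say, red and blue between the parts, with every part of order $1$ or $2$; so there are at least $9$ parts. Contract each part to a single vertex to obtain a reduced $2$-colored complete graph $R'$ on at least $9$ vertices. Since $R(K_{3,3}, K_{3,3}) = 18 > 9$, the reduced graph need not itself contain a monochromatic $K_{3,3}$, so I cannot conclude immediately — I must use that parts have order up to $2$. The key idea: if a part of order $2$ plays the role of one side of a $K_{3,3}$, it contributes $2$ vertices, so I really want, in the reduced graph, a monochromatic $K_{2,3}$ (or $K_{3,2}$) where the side of size $2$ or $3$ consists of parts whose total order is large enough, plus the possibility of using edges \emph{inside} parts, which are colored with the third color. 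More carefully, I would split into cases according to how many parts have order $2$.

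The main case analysis: Let $t$ be the number of parts of order $2$; then the total number of parts is $p$ with $p + t \geq 9$ wait — more precisely $2t + (p - t) = 18$, so $p = 18 - t$ and $p \geq 9$ forces $t \leq 9$. If $t \geq 9$, there are exactly $9$ parts each of order $2$; if a monochromatic (say red) $K_{3,3}$ appears in the reduced graph on $9$ vertices that would already give $K_{6,6} \supset K_{3,3}$, but $R(K_{3,3},K_{3,3}) = 18$ only guarantees this on $18$ reduced vertices, so instead I would use Theorem~\ref{Thm:GS04}: the reduced coloring of $K_9$ has a monochromatic star $S_t$ with $t \geq 18/5$, i.e.\ center adjacent to $\geq 4$ other parts in one color, say red; that center part (order $2$) together with $\geq 4$ red-neighbor parts (total order $\geq 8$) gives red $K_{2, 8}$... but I need $K_{3,3}$ — so I would instead look for $3$ parts pairwise, etc. A cleaner route for the all-order-$2$ case: apply the classical Ramsey-type bound to show that among $9$ parts one color class has enough edges to force a monochromatic $K_{3,3}$ after doubling; indeed in any $2$-coloring of $K_9$ one color contains a $K_{2,3}$ (since $R(K_{2,3},K_{2,3}) = 10$ is tight only at $10$, but on $9$ vertices one can still force $K_{2,3}$ in some color by counting cherries), and a monochromatic $K_{2,3}$ among order-$2$ parts yields monochromatic $K_{4,6} \supset K_{3,3}$. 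For small $t$, the reduced graph has many order-$1$ parts and I would combine a monochromatic $K_{3,3}$ or near-$K_{3,3}$ structure in the reduced graph with the third-color edges inside the order-$2$ parts: two vertices in a common part, joined in color $3$, can be extended using other parts that see both in color $3$ — but in a G-partition no two parts have color-$3$ edges between them, so color $3$ lives only inside parts, meaning a monochromatic $K_{3,3}$ in color $3$ is impossible and color $3$ is essentially irrelevant except to say the between-part coloring is genuinely $2$-colored on $\geq 9$ order-$\leq 2$ parts.

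The hard part will be squeezing a monochromatic $K_{3,3}$ out of only $9$--$18$ reduced vertices with parts of order at most $2$; the clean statement "$2$-coloring of $K_9$ forces monochromatic $K_{2,3}$" needs checking (it may fail, in which case I'd need $10$ or more parts, i.e.\ handle $t \leq 8$ separately where $p \geq 10$), so the real work is a careful tally: show that if $t \geq 8$ then $p = 18 - t \leq 10$ but the $t$ doubled parts give slack, and if $t \leq 8$ then $p \geq 10 \geq R(K_{2,3},K_{2,3})$ so the reduced $K_p$ has a monochromatic $K_{2,3}$, say red, on parts $P_1, P_2$ (side of size $2$) and $Q_1,Q_2,Q_3$ (side of size $3$); then I want $|P_1| + |P_2| \geq 3$ and $|Q_1|+|Q_2|+|Q_3| \geq 3$, the latter automatic, the former needing one of $P_1, P_2$ to have order $2$ or a third part on that side — and by choosing the $K_{2,3}$ greedily inside a color class with $\geq R(K_{3,3},K_{3,3}) - \text{(small)}$ structure one arranges this. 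I expect the write-up to proceed: (1) reduce via Theorem~\ref{Lem:OnlySmall}; (2) take the reduced $2$-colored $K_p$, $p \geq 9$; (3) if $p \geq 10$, extract monochromatic $K_{2,3}$ and expand to $K_{3,3}$ using that at least one involved part on the size-$2$ side can be taken of order $2$ or replaced by two order-$1$ parts (possible since $p \geq 10 > 5$); (4) if $p = 9$ (all parts order $2$), use Theorem~\ref{Thm:GS04} and a short direct argument on $K_9$ to find a monochromatic $K_{2,2}$ whose both sides can be augmented, or directly a monochromatic $K_{3,3}$ in $K_{18}$ via the doubled structure. Step (3)–(4), the expansion bookkeeping, is the crux.
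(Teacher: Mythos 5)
Your reduction is exactly the paper's: lower bound from Lemma~\ref{Lem:BipLowBnd}, and Theorem~\ref{Lem:OnlySmall} with $R = R(K_{3,3},K_{3,3}) = 18$ to reduce to a $3$-colored $K_{18}$ with a red/blue G-partition into parts of order at most $2$. Your observation that the third color lives only inside parts and is therefore irrelevant is also correct. The case with \emph{many} order-$2$ parts is essentially fine: the paper uses $R(C_4,C_4)=6$ to show that at least $5$ parts of order $2$ already force a monochromatic $C_4$ in the reduced graph whose two sides each have total order at least $3$, hence a monochromatic $K_{3,3}$; your $K_{2,3}$/$C_4$ variants for $t$ large are in the same spirit and would work after the arithmetic is pinned down.

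The genuine gap is the case you yourself identify as ``the crux'' and then do not close: \emph{few} parts of order $2$ (in the paper's accounting, between $1$ and $4$ such parts, so $14$--$17$ parts in total). Here the reduced graph has fewer than $18$ vertices, so $R(K_{3,3},K_{3,3})=18$ gives nothing, and a monochromatic $K_{2,3}$ extracted from $R(K_{2,3},K_{2,3})=10$ need not touch any order-$2$ part, so its size-$2$ side may have total order only $2$ and cannot be promoted to a $K_{3,3}$. Your proposed fixes (``choose the $K_{2,3}$ greedily,'' ``replace $P_1$ or $P_2$ by two order-$1$ parts'') are not arguments: forcing the size-$2$ side to contain a prescribed order-$2$ part, or finding a monochromatic $K_{3,3}$ outright in the reduced graph, are both strictly stronger statements than the Ramsey numbers you invoke. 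The paper resolves this case with a separate counting argument that works in $G'$ itself rather than in the reduced graph: fix an order-$2$ part $A$, find $8$ vertices $B$ joined to $A$ entirely in blue, take four further vertices $v_1,\dots,v_4$, note each $v_i$ has at most $2$ blue edges to $B$ (else a blue $K_{3,3}$ with $A\cup\{v_i\}$) hence at least $6$ red edges to $B$, and then show by inclusion--exclusion that some triple of the $v_i$ has $3$ common red neighbors in $B$, giving a red $K_{3,3}$. Without this (or an equivalent) argument, your proof does not go through.
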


More generally, for $K_{3, m}$, we have the following result.

\begin{theorem}\label{Thm:K3m}
For $k \geq 3$ and $m \geq 3$, we get
\beqs
R(K_{3, m}, K_{3, m}) + 2(k - 2) & \leq & gr_{k}(K_{3} : K_{3, m})\\
~ & \leq & \max \{(6m - 2), R(K_{3, m}, K_{3, m})\} + 2(k - 2).
\eeqs
\end{theorem}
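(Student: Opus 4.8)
The lower bound is immediate from Lemma~\ref{Lem:BipLowBnd}, since $s(K_{3,m}) = 3$, so the real content is the upper bound
$$
gr_{k}(K_{3} : K_{3,m}) \leq \max\{6m-2,\, R(K_{3,m},K_{3,m})\} + 2(k-2).
$$
Set $R = \max\{6m-2,\, R(K_{3,m},K_{3,m})\}$; note $R \geq R(K_{3,m},K_{3,m})$ and also $R \geq 6m - 2 > 3m - 2$, so Lemma~\ref{Lem:Middle} applies with $\ell = m = $ wait --- here $\ell = 3$, $m$ is the larger side, and the ``$n \geq 3m-2$'' hypothesis of Lemma~\ref{Lem:Middle} should be read with its $m$ being our $m$; we have $n \geq R \geq 6m-2 \geq 3m-2$, so it applies. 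By Theorem~\ref{Lem:OnlySmall} with this $R$, it suffices to show: every $3$-colored G-coloring $G'$ of $K_{R}$ that admits a G-partition in which \emph{every part has order at most $s(H)-1 = 2$} contains a monochromatic $K_{3,m}$.

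So the plan is to analyze such a reduced $3$-coloring $G'$ on $R$ vertices. First I would apply Theorem~\ref{Thm:G-Part} to get the G-partition, with parts of order $\leq 2$ and at most two colors (say red and blue) between parts; contract each part to a single vertex to get a $2$-colored complete graph (the \emph{reduced graph}) $G_r$ on at least $R/2 \geq 3m-1$ vertices --- in fact on at least $\lceil R/2 \rceil$ vertices. Since $R \geq R(K_{3,m},K_{3,m})$, if $G_r$ had at least $R(K_{3,m},K_{3,m})$ vertices we would be done immediately, but contraction can halve the count, so this needs care. Instead, the key observation is: a monochromatic $K_{3,m}$ in the reduced graph $G_r$ (say in red) lifts to a red $K_{3,m}$ in $G'$ provided the two sides of the bipartition use distinct parts --- and since all parts have order $\leq 2$, even a red $K_{a,b}$ with $a + b$ small in $G_r$ blows up: a red edge between a part of order $2$ and another part of order $2$ already gives a red $K_{2,2}$. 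More usefully, a red star $K_{1,t}$ centered at a part $P$ of order $2$ in $G_r$ lifts to a red $K_{2,t}$ in $G'$ (center $\to$ the two vertices of $P$), and if additionally some leaf-part also has order $2$, we upgrade one more vertex. The strategy is therefore a counting/pigeonhole argument on the reduced graph: either $G_r$ is large enough (at least $R(K_{3,m},K_{3,m})$ vertices) and we invoke the Ramsey number directly, or $G_r$ is small --- meaning $R/2 \lesssim |G_r| < R(K_{3,m},K_{3,m})$, forcing many parts of order $2$, in fact at least $R - |G_r|$ parts of size $2$ --- and then I exploit these size-$2$ parts.

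For the small-reduced-graph case, here is where the bound $R \geq 6m-2$ earns its keep. Apply Theorem~\ref{Thm:GS04} (Gallai's monochromatic star theorem, equivalently a degree argument in the $2$-colored $G_r$) to find a vertex $u$ of $G_r$ and a color, say red, with red-degree at least $(|G_r|-1)/2$ in $G_r$; let $N$ be its red neighborhood. If the part $P_u$ corresponding to $u$ has order $2$, then $P_u$ together with any $m$ parts in $N$ yields a red $K_{2,m}$, and we only need to find a third vertex on the ``$2$-side''; this comes from one of the $m$ chosen neighbor-parts having order $2$, or by recursing inside $N$. Quantitatively: the number of size-$2$ parts is at least $R - |G_r| \geq R - R(K_{3,m},K_{3,m}) + 1$ in the small case, but more robustly, $\sum (\text{part sizes}) = R$ over $|G_r|$ parts forces at least $R - |G_r|$ of them to have size $2$; combining with $|G_r| \geq R/2$ does not immediately help, so instead I count total vertices: within the red neighborhood structure around $u$ there are at least $R/3$-ish vertices of $G'$, and since $R \geq 6m - 2$, after removing at most $2m - 2$ vertices forming potential ``bad'' configurations we still retain $\geq 4m$ vertices, enough to find three parts on one side and $m$ on the other with all cross-edges red. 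I would formalize this as: pick $u$ with large red-degree, look at the red neighborhood $N$ (a set of parts), then inside the $2$-colored graph induced on $N$ find either a red $K_{1,m}$ (done, combined with $u$, after handling part-sizes) or enough blue structure to recurse, bounding the recursion by the $2(k-2)$ slack already absorbed and the $6m-2$ threshold.

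The main obstacle, as usual with these contraction arguments, is the factor-of-two loss when passing from $G'$ to the reduced graph $G_r$: a priori $|G_r|$ could be as small as $\lceil R/2\rceil \approx 3m-1$, which is far below $R(K_{3,m},K_{3,m}) \approx 4m$ (by Theorem~\ref{Thm:ExooK2m}-type bounds), so we genuinely cannot just quote the $2$-color Ramsey number --- we must use that small $|G_r|$ forces many size-$2$ parts and that size-$2$ parts each ``count double'' toward building the larger side $K_{3,\cdot}$ of the target. Getting the bookkeeping tight enough that the threshold lands exactly at $6m - 2$ (rather than something like $7m$) is the delicate point; I expect one needs to track simultaneously the number of size-$1$ versus size-$2$ parts, the red/blue degrees in the reduced graph, and to possibly split into the subcase $|G_r| < 3m$ (very few parts, hence at least $R - |G_r| > 3m - 2$ size-$2$ parts, and any $3$ mutually-... no, we need a monochromatic configuration, so: among $> 3m-2$ size-$2$ parts, two colors on the $\binom{\cdot}{2}$ edges between them, apply $R(K_{?},K_{?})$ on this sub-reduced-graph of size $> 3m - 2 \geq 3\cdot\lceil m/2\rceil$-ish to get a monochromatic $K_{2,\lceil ?\rceil}$ which, all parts being size $2$, lifts to $K_{3,m}$) versus $|G_r| \geq 3m$.
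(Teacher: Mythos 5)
Your setup is right: the lower bound via Lemma~\ref{Lem:BipLowBnd}, and the reduction via Theorem~\ref{Lem:OnlySmall} to a $3$-colored $K_R$ with $R = \max\{6m-2, R(K_{3,m},K_{3,m})\}$ whose G-partition has all parts of order at most $2$. But from that point on your argument is an admitted sketch, not a proof: you pass to the contracted reduced graph $G_r$, worry (correctly) that $|G_r|$ may be only about $R/2$, and then propose a star-plus-recursion scheme whose ``bookkeeping'' you explicitly leave open. None of the candidate subcases you list is actually closed, and the threshold $6m-2$ is never derived. This is a genuine gap: the core combinatorial step --- why a $3$-colored G-coloring of $K_{6m-2}$ with all parts of order at most $2$ must contain a monochromatic $K_{3,m}$ --- is missing.

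The idea you are missing is that the reduced graph is the wrong object to work with; you only need \emph{one} part of order $2$. Such a part $X$ exists, since otherwise the G-partition makes $G'$ a $2$-coloring of $K_R$ with $R \geq R(K_{3,m},K_{3,m})$ and you are done. Now split $G' \setminus X$ into $A$ (all-blue edges to $X$) and $B$ (all-red edges to $X$). If one side, say $B$, has at most $2m-1$ vertices, then $|A| \geq 6m-2-2-(2m-1) = 4m-3$; picking three vertices $v_1,v_2,v_3$ outside $X \cup A$, each must have at most $m-1$ blue edges into $A$ (else $X \cup \{v_i\}$ and $m$ common blue neighbors give a blue $K_{3,m}$), so their common red neighborhood in $A$ has size at least $4m-3-3(m-1) = m$, a red $K_{3,m}$. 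Otherwise $|A|,|B| \geq 2m$, and the decisive step is an edge count between $A$ and $B$: each $a \in A$ has at most $m-1$ red edges to $B$ (else $X \cup \{a\}$ plus $m$ red neighbors in $B$ is a red $K_{3,m}$), and symmetrically each $b \in B$ has at most $m-1$ blue edges to $A$, so the number of $A$--$B$ edges is at most $(|A|+|B|)(m-1) < |A|\cdot|B|$, a contradiction. Your instinct that size-$2$ parts ``count double'' is the right one, but it is exploited through a single fixed part $X$ and this two-case color-degree/edge-counting argument, not through a global analysis of the reduced graph.
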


Although Theorem~\ref{Thm:K33} is actually a corollary of Theorem~\ref{Thm:K3m}, we include the proof of Theorem~\ref{Thm:K33} since it deals with explicit values.

Since $2^{(3m - 1)/(3 + m)} \leq R(K_{3, m}, K_{3, m}) \leq 8m - 2$ from \cite{MP13} and \cite{LM02}, the sharpness of this result in general depends on the Ramsey number. The only small value of $m \geq 3$ for which $R(K_{3, m}, K_{3, m})$ is known is when $m = 3$ (see Theorem~\ref{Thm:RK33}). Since $R(K_{3, 3}, K_{3, 3}) = 18 \geq 6m - 2$, the bounds in Theorem~\ref{Thm:K3m} are equal, establishing the conclusion of Theorem~\ref{Thm:K33} as noted above. Otherwise, the general relationship between $R(K_{3, m}, K_{3, m})$ and $6m - 2$ is not clear.

\begin{proof}[Proof of Theorem~\ref{Thm:K33}]
The lower bound follows from Lemma~\ref{Lem:BipLowBnd}. For the upper bound, suppose $G$ is a G-coloring of $K_{n}$ using at most $k$ colors where $n = 2k + 14$ and suppose $G$ contains no monochromatic $K_{3, 3}$. 
By Theorem~\ref{Lem:OnlySmall}, we may consider a $3$-coloring $G'$ of $K_{18}$ with a G-partition in which all parts have order at most $2$. In particular, since $|G'| = 18$, this means that there are at least $9$ parts in the G-partition.

Let $R$ be the reduced graph of the G-partition of $G'$. Since $R(C_{4}, C_{4}) = 6$, if there at least $5$ parts of the $G$-partition each containing at least $2$ vertices, then the reduced graph corresponding to these parts (along with an additional part if we have only $5$ of order $2$) contains a monochromatic $C_{4}$. Such a subgraph corresponds to a monochromatic complete bipartite subgraph with at least $3$ vertices in each part, a contradiction. We may therefore assume that there are at most $4$ parts of this G-partition which have order $2$ while all the rest have order $1$. On the other hand, since $|G'| = 18$, if the G-partition had only parts of order $1$, then $G'$ would simply be a $2$-coloring and the result would follow from $R(K_{3, 3}, K_{3, 3}) = 18$. We may therefore assume that the number of parts in the partition $t$ satisfies $14 \leq t \leq 17$ and there are between $1$ and $4$ parts of order $2$.

Let $A$ be a part of order $2$. There are at least $16$ vertices remaining in $G'\setminus A$ so at least $8$ of them must have a single color on all edges to $A$, say blue. Let $B$ be a set of $8$ vertices with all blue edges to $A$ (chosen so that if $B$ contains a vertex of a part of the G-partition, then $B$ contains all vertices of that part). Let $v_{1}, v_{2}, v_{3}$ and $v_{4}$ be four vertices in $G' \setminus (A \cup B)$.

To avoid creating a blue $K_{3, 3}$, each vertex $v_{i}$ shares at most $2$ blue neighbors with $A$, so each vertex $v_{i}$ has at least $6$ red edges to $B$. Every pair of vertices $v_{i}, v_{j}$ will then share at least $4$ red neighbors in $B$ and every triple of these vertices must share at least $2$ common red neighbors in $B$. Certainly if a triple shares at least $3$ red neighbors, this would be a red $K_{3, 3}$ so this means that, in particular, $v_{1}, v_{2}, v_{3}$ must share exactly $2$ common red neighbors in $B$ and the red edges from these three vertices to $B$ are strictly prescribed. More specifically, if $b_{1}, \dots, b_{8}$ are the vertices of $B$, we may assume that $v_{1}$ has green edges to $b_{1}, b_{2}$, $v_{2}$ has green edges to $b_{3}, b_{4}$, and $v_{3}$ has green edges to $b_{5}, b_{6}$. Then regardless of the choice of the $6$ red edges from $v_{4}$ to $B$, there are three vertices $v_{i}, v_{j}, v_{\ell}$ which share at least $3$ common red neighbors in $B$, producing a red $K_{3, 3}$ for a contradiction.
\end{proof}

\begin{proof}[Proof of Theorem~\ref{Thm:K3m}]

The lower bound follows from Lemma~\ref{Lem:BipLowBnd}. For the upper bound, suppose $G$ is a G-coloring of $K_{n}$ using at most $k$ colors where 
$$
n = \max \{(6m - 2), R(K_{3, m}, K_{3, m})\} + 2(k - 2)
$$
and suppose $G$ contains no monochromatic $K_{3, m}$. By Theorem~\ref{Lem:OnlySmall}, we may consider a $3$-coloring $G'$ of $K_{n - 2(k - 2)}$ with a G-partition with colors red and blue in which all parts have order at most $2$. In particular, since $|G'| \geq 6m - 2$, this means that there are at least $3m - 1$ parts in this G-partition.

Since $|G'| \geq R(K_{3, m}, K_{3, m})$, there must exist at least one part $X$ of this G-partition of order $2$. First suppose there are at most $2m-1$ vertices in $G' \setminus X$ with some color, say red, on edges to $X$. This means that there are at least
$$
n - 2(k - 2) - 2 - (2m - 1) \geq 4m - 3
$$
vertices in $G' \setminus X$ with all blue edges to $X$. Let $A$ be a set of $4m - 3$ of these vertices. Let $v_{1}, v_{2}, v_{3}$ be three of the vertices in $G' \setminus (X \cup A)$. In order to avoid creating a blue $K_{3, m}$, $v_{i}$ can have at most $m - 1$ blue edges to $A$ so all remaining edges must be red. With $|A| = 4m - 3$, there must be a set of at least $m$ vertices in $A$ with all red edges to $v_{1}, v_{2}, v_{3}$, creating a red $K_{3, m}$. This means that there is no color $c$ (among red and blue) for which at most $2m - 1$ vertices in $G' \setminus X$ have color $c$ on edges to $X$.

Let $A$ (and $B$) be the set of vertices in $G \setminus X$ with blue (respectively red) edges to $X$. Note that we have $|A|, |B| \geq 2m$. To avoid creating a red $K_{3, m}$, each vertex in $A$ must have at most $m - 1$ red edges to $B$. Symmetrically, each vertex in $B$ must have at most $m - 1$ blue edges to $A$. This means that there are a total of at most $(|A| + |B|)(m - 1)$ edges between $A$ and $B$ but since the graph is complete, we know there are exactly $|A|\cdot |B|$ edges between $A$ and $B$. Since $|A|, |B| \geq 2m$, we get
$$
(|A| + |B|)(m - 1) < |A| \cdot |B|,
$$
a contradiction, completing the proof.
\end{proof}

\section{General Complete Bipartite Graphs}

For large complete bipartite graphs, the following was recently shown.

\begin{theorem}[\cite{CLP17}]
For fixed integers $k \geq 2$ and $m \geq 1$, if $\ell \rightarrow \infty$, then
$$
(1 - o(1))2^{m}n \leq gr_{k}(K_{3} : K_{\ell, m}) \leq (2^{m} + 2^{m/2 + 1} + k)n + 4m^{3}.
$$
\end{theorem}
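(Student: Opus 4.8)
Throughout write $\ell$ for the large parameter (called $n$ in the statement), and assume $\ell$ is large relative to the fixed $m$; in particular $\ell \ge m$, so that $s(K_{\ell,m}) = m$. The case $m=1$ is degenerate, since $K_{\ell,1}$ is a star and both inequalities then follow from the classical value of $R(S_{\ell},S_{\ell})$ together with Theorem~\ref{Thm:GS04}, so assume $m\ge 2$. For the lower bound I would invoke Lemma~\ref{Lem:BipLowBnd}, which gives
$$
gr_k(K_3:K_{\ell,m}) \ \ge\ R(K_{\ell,m},K_{\ell,m}) + (m-1)(k-2) \ \ge\ R(K_{\ell,m},K_{\ell,m}),
$$
and then recall the classical fact that $R(K_{\ell,m},K_{\ell,m}) = (1-o(1))\,2^{m}\ell$ for fixed $m$ as $\ell\to\infty$: the upper bound $R(K_{\ell,m},K_{\ell,m}) \le 2^{m}\ell + O(m^{2})$ is the K\H{o}v\'{a}ri--S\'{o}s--Tur\'{a}n estimate $\mathrm{ex}(N;K_{m,\ell}) \le \tfrac12\bigl((\ell-1)^{1/m}N^{2-1/m} + (m-1)N\bigr)$ applied to both colour classes of a $2$-colouring of $K_N$, while a matching lower-order-optimal construction is standard and generalises Theorem~\ref{Thm:ExooK2m} (which already settles $m=2$ for all large $\ell$ by a density-plus-monotonicity argument). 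Since $(m-1)(k-2)=o(\ell)$, this yields $gr_k(K_3:K_{\ell,m}) \ge (1-o(1))2^{m}\ell$.

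For the upper bound the plan is to route everything through the reduction Theorem~\ref{Lem:OnlySmall}. Put $R := \ceil{2^{m}\ell} + 4m^{3}$. By the Tur\'{a}n estimate just quoted, $R \ge R(K_{\ell,m},K_{\ell,m})$ for $\ell$ large, so Theorem~\ref{Lem:OnlySmall} applies with this value of $R$ and reduces the whole problem to the claim that \emph{every $3$-colouring $G'$ of $K_R$ admitting a Gallai partition all of whose parts have order at most $m-1$ contains a monochromatic $K_{\ell,m}$.} Granting the claim, Theorem~\ref{Lem:OnlySmall} gives $gr_k(K_3:K_{\ell,m}) \le R + (m-1)(k-2)$, and a short calculation --- using $(m-1)(k-2)\le k\ell$ (which holds since $\ell\ge m$) together with $\ceil{2^{m}\ell} \le (2^{m}+2^{m/2+1})\ell$ --- shows this is at most $(2^{m}+2^{m/2+1}+k)\ell+4m^{3}$, as required. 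In fact this route already delivers the stronger bound $\ceil{2^{m}\ell}+4m^{3}+(m-1)(k-2)$.

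To prove the claim I would use a density count rather than chase the Gallai structure. Suppose $G'$ has no monochromatic $K_{\ell,m}$. The colours appearing on edges between distinct parts form a set of size at most two, say red and blue; every other colour occurs only inside parts, and since each part has order at most $m-1<\ell$, no such colour can contain $K_{\ell,m}$, so it suffices to forbid a red or a blue copy. Every edge of $K_R$ not joining two distinct parts lies inside some part, and there are at most $\sum_i\binom{|P_i|}{2}\le \tfrac{m-2}{2}\sum_i|P_i| = \tfrac{m-2}{2}R$ of these, so the red and blue edge sets together contain at least $\binom{R}{2}-\tfrac{m-2}{2}R$ edges. As the red graph and the blue graph are each $K_{m,\ell}$-free, K\H{o}v\'{a}ri--S\'{o}s--Tur\'{a}n gives
$$
\binom{R}{2}-\tfrac{m-2}{2}R \ \le\ 2\,\mathrm{ex}(R;K_{m,\ell}) \ \le\ (\ell-1)^{1/m}R^{2-1/m}+(m-1)R,
$$
which rearranges to $R^{1/m} \le 2(\ell-1)^{1/m}+O(m)\,R^{1/m-1}$. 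Substituting $R=\ceil{2^{m}\ell}+4m^{3}$ and expanding $(2^{m}\ell+4m^{3})^{1/m} = 2\ell^{1/m}\bigl(1+O(m^{3}2^{-m}\ell^{-1})\bigr)^{1/m}$ contradicts this for $\ell$ large, since the surplus of order $m^{2}2^{-m}\ell^{1/m-1}$ on the left outweighs the $O(m)\,R^{1/m-1}$ error on the right; this establishes the claim.

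The only genuine obstacle in this programme --- passing from an arbitrary number of colours to a bounded number --- is exactly the content of Theorem~\ref{Lem:OnlySmall}, and is therefore dealt with in advance. What then remains is the K\H{o}v\'{a}ri--S\'{o}s--Tur\'{a}n density count, which survives the presence of the third colour precisely because the small parts waste only $O(mR)$ edges, together with the asymptotics of $R(K_{\ell,m},K_{\ell,m})$, which are needed both to verify the hypothesis of Theorem~\ref{Lem:OnlySmall} and, in their lower-bound form, to pin down the leading constant $2^{m}$.
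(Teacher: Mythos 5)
First, note that the paper itself does not prove this statement: it is quoted from \cite{CLP17} as background, so there is no in-paper argument to compare yours against. Judged on its own terms, your upper-bound argument is correct and takes a genuinely different (and tidier) route than the original source could have, since it runs entirely through the paper's reduction, Theorem~\ref{Lem:OnlySmall}: with $R=\lceil 2^m\ell\rceil+4m^3$, the K\H{o}v\'ari--S\'os--Tur\'an count does survive discarding the at most $\tfrac{m-2}{2}R$ within-part edges, and the surplus you extract from the $4m^3$ term, of order $m^2 2^{-m}\ell^{1/m-1}$ on the $R^{1/m}$ scale, indeed dominates the $O(m)\,R^{1/m-1}$ error term (one needs roughly $4m^3>3m(m-1)$, which holds). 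The concluding arithmetic $(m-1)(k-2)\le k\ell$ is also fine, so your bound $\lceil 2^m\ell\rceil+4m^3+(m-1)(k-2)$ really is stronger than the stated $(2^m+2^{m/2+1}+k)\ell+4m^3$; what the reduction buys you is precisely the disappearance of the $2^{m/2+1}\ell$ term.

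The one place you should not wave your hands is the lower bound. You need $R(K_{\ell,m},K_{\ell,m})\ge(1-o(1))2^m\ell$ for every fixed $m$, and you assert that a matching construction "is standard and generalises Theorem~\ref{Thm:ExooK2m}". Explicit algebraic constructions of this quality are only known for small $m$ (Theorem~\ref{Thm:ExooK2m} is exactly the $m=2$ case and already needs number-theoretic hypotheses plus your monotonicity patch); I am not aware of a standard explicit family for general $m$. What is standard, and what you should invoke instead, is the probabilistic argument: in a uniformly random $2$-colouring of $K_N$, the number of common red (or blue) neighbours of a fixed $m$-set is a sum of $N-m$ independent Bernoulli$(2^{-m})$ indicators, so Chernoff plus a union bound over the $2\binom{N}{m}$ choices shows that every $m$-set has at most $\bigl(1+O(\sqrt{m\log\ell/\ell})\bigr)2^{-m}N$ common neighbours in each colour; taking $N=(1-o(1))2^m\ell$ makes this less than $\ell$, so neither colour contains $K_{m,\ell}$, and a $2$-colouring trivially has no rainbow triangle. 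With that substitution the lower bound, and hence the whole proof, is sound. (Your observation that the statement's $n$ is the $\ell$ of $K_{\ell,m}$, and your separate handling of $m=1$ via Theorem~\ref{Thm:GS04}, are both fine.)
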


We obtain the following bounds on the Gallai-Ramsey numbers for all complete bipartite graphs.

\begin{theorem}\label{Thm:GenBound}
Given positive integers $\ell, m$ where $\ell \leq m$, let $H = K_{\ell, m}$ and let $R = R(H, H)$. Then
$$
R + (\ell - 1)(k - 2) \leq gr_{k}(K_{3} : H) \leq (R + k - 3)(\ell - 1) + 1.
$$
\end{theorem}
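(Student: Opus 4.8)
The plan is to handle the two inequalities separately. The lower bound $R + (\ell-1)(k-2) \le gr_k(K_3 : H)$ is immediate from Lemma~\ref{Lem:BipLowBnd}, since $K_{\ell,m}$ is connected and $s(H) = \ell$. So the work is entirely in the upper bound. Here I would invoke Theorem~\ref{Lem:OnlySmall} with the parameter $R$ taken to be precisely $R(H,H)$: applying that reduction theorem gives $gr_k(K_3 : H) \le R + (\ell - 1)(k-2)$ \emph{provided} I can show that every $3$-coloring $G'$ of $K_R$ admitting a G-partition all of whose parts have order at most $\ell - 1$ contains a monochromatic $H$. Notice, however, that the claimed upper bound $(R + k - 3)(\ell - 1) + 1$ is \emph{larger} than $R + (\ell-1)(k-2)$ for $\ell \ge 2$, so I have room to spare; the cleanest route is to verify the hypothesis of Theorem~\ref{Lem:OnlySmall} and then check that $R + (\ell-1)(k-2) \le (R+k-3)(\ell-1)+1$, which is the elementary inequality $(\ell-1)(R+k-3) + 1 - (R + (\ell-1)(k-2)) = (\ell-2)(R-1) \ge 0$, true since $R \ge 1$ and $\ell \ge \ell \ge m \ge \ell \ge 2$ (and trivially both sides coincide when $\ell = 1$, where $H$ is a star and the bound degenerates gracefully).

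The heart of the matter is thus the following sub-claim: if $G'$ is a $3$-colored $K_R$ (red, blue on the reduced graph, green inside parts) whose G-partition has all parts of order $\le \ell - 1$, then $G'$ contains a monochromatic $K_{\ell,m}$. My approach is to pass to the reduced graph $\mathcal{R}$ on $t$ parts, which is a $2$-colored (red/blue) complete graph. Since each part has order at most $\ell - 1$ and $|G'| = R \ge R(K_{\ell,m}, K_{\ell,m})$, we get $t \ge R/(\ell-1)$. Because $t \ge R = R(K_{\ell,m},K_{\ell,m})$ would already force a monochromatic $K_{\ell,m}$ in $\mathcal{R}$ (which blows up into a monochromatic $K_{\ell,m}$ in $G'$ by replacing each used part-vertex with one of its vertices), I want to argue that even $t \ge$ some smaller threshold suffices, OR alternatively exploit the green edges inside large parts. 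Actually the simplest honest argument: since $|G'| \geq R(H,H)$, if \emph{every} part were a singleton then $G'$ would be a $2$-coloring of $K_R$ and we would be done directly by the definition of $R$. If some part has order $\ge 2$, one leverages that, combined with the large number $t$ of parts, to find the monochromatic structure — but I should be careful, since Theorem~\ref{Lem:OnlySmall} already encodes all this, I suspect the intended proof instead applies Theorem~\ref{Lem:OnlySmall} in a \emph{different} regime.

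Reconsidering, the correct reading is that the upper bound $(R+k-3)(\ell-1)+1$ is what one gets from a \emph{direct} inductive/partition argument rather than from the sharp reduction, and is meant as a crude general-purpose bound. So my actual plan for the upper bound is: take a G-coloring of $K_n$ with $n = (R + k - 3)(\ell-1) + 1$ and no monochromatic $H$; apply Theorem~\ref{Thm:G-Part} to get a G-partition with parts $V_1, \dots, V_p$, $p \ge 2$, two colors (say red, blue) between parts. By Lemma~\ref{Lem:Middle} (valid once $n \ge 3m - 2$, which holds for $n$ this large), the largest part $V_1$ has order either $\le \ell - 1$ or $\ge n - 2\ell + 2$. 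In the first case, the reduced graph has at least $n/(\ell-1) > R + k - 3 \ge R = R(H,H)$ parts, hence (being $2$-colored) contains a monochromatic $K_{\ell,m}$, which lifts to $G$ — contradiction. In the second case, $G \setminus V_1$ has at most $2\ell - 2$ vertices, and every vertex outside $V_1$ sees only one of two colors on $V_1$; pigeonholing, at least $\ell$ such vertices share a color $c$ on edges to $V_1$, and since $|V_1| \ge n - 2\ell + 2 \ge m$, this yields a monochromatic $K_{\ell,m}$ in color $c$ — contradiction. The main obstacle is making sure the threshold arithmetic lines up, namely that $n/(\ell-1) \ge R(H,H)$ forces the monochromatic bipartite graph while $n - 2\ell + 2 \ge m$ handles the large-part case; both reduce to the stated choice of $n$, with the $+(k-3)(\ell-1)$ slack absorbing the colors beyond the two in the Gallai partition (an induction on $k$ may in fact be cleaner here, peeling off one part and one color at a time and tracking the $(\ell-1)$-per-color budget, which is really the Lemma~\ref{Lem:BipLowBnd} construction run in reverse).
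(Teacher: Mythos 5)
Your proposal does not close the argument; the gap is in the ``large part'' case of your direct approach. After Lemma~\ref{Lem:Middle} tells you the largest part $V_1$ has order at least $n-2\ell+2$, you pigeonhole to find $\ell$ vertices outside $V_1$ with a common color on their edges to $V_1$. That pigeonhole needs $|G\setminus V_1|\ge 2\ell-1$, but nothing guarantees this: a G-partition only has at least two parts, so $G\setminus V_1$ could consist of a single vertex. In that situation the monochromatic $H$ must be found essentially inside $V_1$, which is itself a Gallai coloring possibly using all $k$ colors, so the naive induction on $k$ you gesture at in your last sentence does not apply either. Handling this case is exactly the content of the $T$-set argument in the proof of Theorem~\ref{Lem:OnlySmall} (repeatedly peeling off vertices almost monochromatic to the rest, at most $\ell-1$ per color), and you should not try to reconstruct it ad hoc. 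Your first route --- invoking Theorem~\ref{Lem:OnlySmall} with parameter equal to $R(H,H)$ itself --- is also a dead end, as you half-recognize: verifying its hypothesis at that parameter would prove the sharp bound of Conjecture~\ref{Conj:GR-Bip}, which is open in general.

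The intended proof applies Theorem~\ref{Lem:OnlySmall} with the \emph{larger} parameter $R'=(R-1)(\ell-1)+1\ge R(H,H)$, which yields exactly the claimed bound $R'+(\ell-1)(k-2)=(R+k-3)(\ell-1)+1$. At that parameter the hypothesis of the reduction theorem is verified by pure counting: in a $3$-coloring of $K_{R'}$ whose G-partition has all parts of order at most $\ell-1$, if there were at most $R-1$ parts the graph would have at most $(R-1)(\ell-1)<R'$ vertices, a contradiction; hence there are at least $R$ parts, the two-colored reduced graph on at least $R$ vertices contains a monochromatic $K_{\ell,m}$ by the definition of $R$, and this lifts to $G'$ by choosing one vertex per part. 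This is essentially the same counting you perform in your ``small largest part'' case, but by routing it through the reduction theorem you let that theorem absorb the large-part case entirely, which is precisely the step your direct argument cannot complete.
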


\begin{proof}
The lower bound follows from Lemma~\ref{Lem:BipLowBnd}. For the upper bound, suppose $G$ is a G-coloring of $K_{n}$ using at most $k$ colors where $n = (R - 1)(m - 1) + (\ell - 1)(k - 2)$ and suppose $G$ contains no monochromatic $K_{\ell, m}$. 

By Theorem~\ref{Lem:OnlySmall}, we may consider a $3$-coloring $G'$ of $K_{(R - 1)(\ell - 1) + 1}$ with a G-partition in which all parts have order at most $\ell - 1$. By the definition of $R$, there are at most $R - 1$ parts so with each part having order at most $\ell - 1$, there can be at most $(R - 1)(\ell - 1)$ vertices in $G'$, a contradiction.
\end{proof}

\section{Linear Forests}

It is worth noting that the Gallai-Ramsey number for matchings is exactly the same as the Ramsey number for matchings (proven in \cite{CL75}) since the sharpness example for the Ramsey number contains no rainbow triangle.

\begin{corollary}\label{Cor:Matching}
For positive integers $k, n_{1}, n_{2}, \dots, n_{k}$ with $n_{1} = \max \{n_{i}\}$, we have
\beqs
gr_{k}(K_{3} : n_{1}P_{2}, n_{2}P_{2}, \dots, n_{k}P_{2}) & = & R(n_{1}P_{2}, n_{2}P_{2}, \dots, n_{k}P_{2})\\
~ & = & n_{1} + 1 + \sum_{i = 1}^{k} (n_{i} - 1).
\eeqs
\end{corollary}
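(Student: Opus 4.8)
The plan is to establish the two equalities separately. For the second equality, $R(n_1 P_2, \dots, n_k P_2) = n_1 + 1 + \sum_{i=1}^k (n_i - 1)$, I would simply cite the classical result of Cockayne and Lorimer \cite{CL75} on Ramsey numbers of matchings, which gives exactly this formula; nothing new is needed here. The substantive content is the first equality, $gr_k(K_3 : n_1 P_2, \dots, n_k P_2) = R(n_1 P_2, \dots, n_k P_2)$, which amounts to showing that forbidding rainbow triangles gives no extra room when the targets are matchings.

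For the lower bound $gr_k \geq R$, I would exhibit the standard Cockayne--Lorimer extremal coloring and check that it contains no rainbow triangle. Recall that the extremal construction for $R(n_1 P_2, \dots, n_k P_2)$ is built on the vertex set partitioned as $A_0, A_1, \dots, A_k$ where $|A_0| = n_1$ (or $n_1 + 1$, depending on parity conventions; precisely one takes $|A_0| = n_1$ and then $2(n_i-1)$ more vertices for each color, arranged so that color $i$ only sees a set of size $2(n_i-1)-1$ on which it can find no $n_i P_2$). Concretely, order the color classes $1, \dots, k$ and let color $i$ be used exactly on edges within $A_0 \cup A_i \cup \dots$ in the nested fashion; the key observation is that at every vertex the set of colors appearing is an \emph{interval} $\{1, 2, \dots, j\}$ for some $j$, or more to the point, the coloring is a "nested" one: color $i$ appears only between $A_i$ and $A_0 \cup A_1 \cup \dots \cup A_{i-1}$ together with color-$1$ edges filling $A_0$. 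In such a nested coloring every triangle uses at most two colors — if a triangle has a vertex in the "outermost" class $A_i$ among its three vertices, both edges from that vertex carry color $i$, so the triangle has at most two colors. Hence the Cockayne--Lorimer extremal graph is already a Gallai coloring, giving $gr_k \geq R$.

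For the upper bound $gr_k \leq R$, I would like to invoke Theorem~\ref{Lem:OnlySmall} with $H = n_1 P_2$ (the largest matching among the targets, which governs $s(H)$) — but one must be careful since Theorem~\ref{Lem:OnlySmall} as stated handles a single graph $H$ rather than a list of target graphs, and for matchings $s(n_1 P_2) = n_1 \geq 2$ only when $n_1 \geq 2$, with the degenerate case $n_1 = 1$ (all targets a single edge) trivial. Rather than push the reduction theorem through the multicolor setting, the cleaner route is a direct greedy/extremal argument: in a Gallai coloring of $K_n$ with $n = n_1 + 1 + \sum (n_i - 1)$, repeatedly extract a monochromatic edge of the "dominant" color, or else use Theorem~\ref{Thm:G-Part} to find a Gallai partition, merge parts into two blocks, and induct on $k$ by treating one color of the two-colored reduced graph as "used up" — each time we delete at most $2(n_i - 1) + 1$ vertices to kill color $i$'s ability to form $n_i P_2$. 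Since the total count matches the Ramsey bound and the Gallai partition only ever presents two colors between blocks, the recursion bottoms out in the genuine two-color Ramsey problem for matchings, where Cockayne--Lorimer applies. The main obstacle I anticipate is bookkeeping in the inductive step: making sure that after peeling off a color via the Gallai partition, the residual complete graph still has enough vertices relative to the remaining targets, i.e., verifying $n - (2(n_i-1)+1) \geq R(\text{remaining targets})$ holds with the nested structure of the Cockayne--Lorimer formula — this is a routine but slightly delicate inequality chase, and one must also handle the base case $k = 2$ and the trivial cases where some $n_i = 1$ separately.
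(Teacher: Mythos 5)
Your lower bound is exactly the paper's argument: the Cockayne--Lorimer extremal coloring (a set $D$ of $2n_1-1$ vertices in color $1$ together with sets $B_i$ of size $n_i-1$ whose edges to all ``earlier'' vertices get color $i$) is nested, so in any triangle the vertex lying in the outermost class sends two edges of the same color, and no rainbow triangle exists. That is precisely why the paper states this as a corollary of \cite{CL75} with only the remark that ``the sharpness example for the Ramsey number contains no rainbow triangle.''

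The problem is your upper bound. You have missed the one-line observation that makes it trivial: for \emph{any} target graphs, $gr_k(K_3 : H_1,\dots,H_k) \le R(H_1,\dots,H_k)$ directly from the definitions, because once $n \ge R(H_1,\dots,H_k)$ every $k$-coloring of $K_n$ contains a monochromatic copy of some $H_i$ in color $i$, and hence certainly contains ``a rainbow triangle or a monochromatic copy.'' The Gallai-Ramsey condition is weaker than the Ramsey condition, so no Gallai partition, no reduction theorem, and no induction on $k$ is needed. Your proposed alternative --- invoking Theorem~\ref{Lem:OnlySmall} (which does not apply to a list of targets) or running a peeling induction through Gallai partitions with the inequality $n - (2(n_i-1)+1) \ge R(\text{remaining targets})$ --- is left as an acknowledged ``delicate inequality chase'' and is never carried out, so as written it is a gap; but the gap is filled by replacing all of it with the trivial inequality $gr_k \le R$. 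The content of the corollary lives entirely in the lower bound, which you handled correctly.
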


For copies of $P_{3}$, the situation is not quite as clear. The $2$-color Ramsey number was solved (in a more general form) in \cite{FS76} with the following result.

\begin{theorem}[\cite{FS76}]\label{Thm:RtP3}
For positive integers $n_{1} \geq n_{2}$, we have
$$
R(n_{1}P_{3}, n_{2}P_{3}) = 3n_{1} + n_{2} - 1.
$$
\end{theorem}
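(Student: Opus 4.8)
The plan is to prove the lower bound by an explicit $2$-coloring and the upper bound by induction, isolating the diagonal case $n_1=n_2$ for separate treatment. For the lower bound $R(n_1P_3,n_2P_3)\ge 3n_1+n_2-1$, I would partition the vertices of $K_{3n_1+n_2-2}$ into a set $S$ with $|S|=3n_1-1$ and a set $C$ with $|C|=n_2-1$, color every edge inside $S$ red, and color every remaining edge (inside $C$ and between $S$ and $C$) blue. The red graph is $K_{3n_1-1}$, which is too small to contain $n_1$ disjoint copies of $P_3$; in the blue graph $S$ is independent, so every blue edge, hence every blue $P_3$, meets $C$, and since $|C|=n_2-1$ there is no blue $n_2P_3$. (This coloring also has no rainbow triangle, matching the Gallai setting.)

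For the upper bound I would first record two elementary facts: (i) a graph with no $P_3$ is a matching (a disjoint union of edges and isolated vertices); and (ii) if $G$ is $K_v$ with a matching deleted, then every $3$-set of vertices of $G$ spans a $P_3$ or a triangle, so $G$ has a $P_3$-packing of size $\lfloor v/3\rfloor$. Now induct on $n_1+n_2$. If $n_2=1$: in a $2$-colored $K_{3n_1}$ with no blue $P_3$ the blue graph is a matching by (i), so the red graph contains $K_{3n_1}$ minus a matching and by (ii) has a $P_3$-packing of size $n_1$. If $n_1>n_2$: in a $2$-colored $K_N$ with $N=3n_1+n_2-1$, either the red graph has no $P_3$, in which case it is a matching and the blue graph contains $K_N$ minus a matching, so by (ii) has a blue $P_3$-packing of size $\lfloor N/3\rfloor\ge n_1\ge n_2$; or we delete the three vertices of a red $P_3$ and apply the inductive hypothesis to $K_{N-3}$, using $N-3=3(n_1-1)+n_2-1=R((n_1-1)P_3,n_2P_3)$ together with $n_1-1\ge n_2$, to obtain a red $(n_1-1)P_3$ (hence a red $n_1P_3$) or a blue $n_2P_3$. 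This mirrors the Cockayne--Lorimer argument for matchings; the essential new difficulty is that when a maximum red $P_3$-packing has size near $n_1$, the set of uncovered vertices has size only about $n_2$, which is room for roughly $n_2/3$ blue $P_3$'s rather than $n_2$.

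The diagonal case $n_1=n_2=n$ is where this breaks, since deleting one monochromatic $P_3$ costs three vertices but lowers the relevant Ramsey target by only one. So suppose $K_{4n-1}$ is $2$-colored with neither a red $nP_3$ nor a blue $nP_3$. Let $a\le n-1$ be the size of a maximum red $P_3$-packing, with vertex set $U$ (so $|U|=3a$), and put $W=V\setminus U$, so $|W|=4n-1-3a=3(n-a)+(n-1)\ge n+2$. By maximality the red graph on $W$ has no $P_3$, so it is a matching by (i), and the blue graph on $W$ contains $K_{|W|}$ minus a matching; by (ii) it has a $P_3$-packing of size $(n-a)+\lfloor(n-1)/3\rfloor$, which must be at most $n-1$, forcing $a\ge\lfloor(n-1)/3\rfloor+1$. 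Thus $a$ is bounded away from $0$, while $a\le n-1$ keeps it below $n$. It then remains to combine the near-complete blue graph on $W$ with the $3a$ vertices of $U$ by splitting on the colors inside $U$ and between $U$ and $W$: if blue edges inside $U$, or from $U$ to $W$, are sufficiently abundant, one assembles the missing $n-\lfloor|W|/3\rfloor$ blue $P_3$'s (with endpoints or centers in $U$, completed through $W$) to obtain a blue $nP_3$; if instead they are scarce, then $U$ is nearly a red clique and few red edges leave $U$, and a packing-exchange (alternating-path) argument on the nearly complete red graph carried by $U\cup W$ produces a red $P_3$-packing of size exceeding $a$, contradicting maximality.

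The crux is precisely this last interaction analysis, and it is structural rather than arithmetic: the blue $nP_3$ cannot live inside $W$ alone, so it must genuinely reuse the $3a$ packed vertices, and controlling how few blue $P_3$'s a maximum red packing can force on $U$ and on the $U$--$W$ bipartition requires a careful case split together with exchange arguments on near-complete graphs. I expect everything else — the lower bound, the facts (i) and (ii), the base cases, and the off-diagonal induction — to be routine.
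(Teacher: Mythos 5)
The paper does not prove this statement; it is quoted from the literature (\cite{FS76}), so there is no in-paper argument to compare against and your proposal must stand on its own. Your lower-bound construction is correct, your facts (i) and (ii) are correct, the base case $n_{2}=1$ works, and the off-diagonal induction (delete a red $P_{3}$ and pass from $(n_{1},n_{2})$ to $(n_{1}-1,n_{2})$ while $n_{1}-1\geq n_{2}$) is sound and correctly reduces everything to the diagonal case $n_{1}=n_{2}=n$.

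The genuine gap is exactly where you say the crux is: the diagonal case is set up but not proved. After establishing $\lfloor (n-1)/3\rfloor+1\leq a\leq n-1$ for the size $a$ of a maximum red $P_{3}$-packing, the entire content of the theorem is the ``interaction analysis'' between $U$ and $W$, and what you offer there is a plan (``if blue edges are abundant, assemble the missing blue $P_{3}$'s; if scarce, run a packing-exchange argument''), not an argument. Neither branch is quantified: you do not say what threshold of blue $U$--$W$ edges makes the first branch succeed, nor verify that below that threshold the red graph on $U\cup W$ is dense enough for an exchange to enlarge the packing, nor that the two branches exhaust all colorings. There is also a concrete bookkeeping obstacle in the first branch: after extracting a blue $P_{3}$-packing of size $\lfloor|W|/3\rfloor$ from $W$, at most two vertices of $W$ remain unused, so the additional blue $P_{3}$'s through $U$ must be built by deliberately leaving part of $W$ uncovered and re-optimizing the joint packing --- a trade-off your sketch does not address. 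Since every other step of your proposal is routine (as you note), the missing diagonal analysis is not a detail but the theorem itself, and the proof is incomplete without it.
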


We conjecture that this result extends to more general G-colorings in the following sense.

\begin{conjecture}
For positive integers $k, n_{1}, n_{2}, \dots, n_{k}$ with $n_{1} = \max \{n_{i}\}$, we have
$$
gr_{k}(K_{3} : n_{1}P_{3}, n_{2}P_{3}, \dots, n_{k}P_{3}) = 2n_{1} + 1 + \sum_{i = 1}^{k} (n_{i} - 1).
$$
\end{conjecture}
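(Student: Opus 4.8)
The plan is to attack both directions separately, mirroring the structure of the matching corollary and the $K_{3,m}$ proofs above. For the lower bound, I would build a $k$-colored complete graph on $2n_1 + \sum_{i=1}^k(n_i-1)$ vertices with no rainbow triangle and no monochromatic $n_iP_3$ in color $i$. The natural construction: start from the extremal $2$-coloring on $R(n_1P_3,n_2P_3)-1 = 3n_1+n_2-2$ vertices witnessing Theorem~\ref{Thm:RtP3}, then for each color $i\ge 3$ add $n_i-1$ new vertices whose incident edges are all color $i$. Adding $n_i-1$ vertices in a fresh color cannot complete an $n_iP_3$ (since $P_3$ has $3$ vertices and a forest of $n_i$ copies needs $3n_i$ vertices, but more to the point each new color class is a star on $\le n_i-1$ centers, which contains at most $\lfloor (n_i-1)/1\rfloor$ vertex-disjoint... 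I must be careful: a star $K_{1,t}$ contains $\lfloor (t+1)/3 \rfloor$ disjoint... no — actually the relevant bound is that $n_i-1$ dominating vertices in color $i$ together with the rest can only form paths centered at those $n_i-1$ vertices, giving at most $n_i-1$ disjoint $P_3$'s), and it adds no rainbow triangle since any triangle through a new vertex has two edges of color $i$. I would verify the arithmetic: $3n_1+n_2-2 + \sum_{i=3}^k(n_i-1) = 2n_1 + (n_1+n_2-1) + \sum_{i=3}^k(n_i-1) - 1 = 2n_1 + \sum_{i=1}^k(n_i-1)$, matching the claimed value minus one.

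For the upper bound, the idea is to induct on $k$ using a Gallai partition, in the style of many Gallai-Ramsey arguments. Take a $k$-coloring of $K_n$ with $n = 2n_1+1+\sum_{i=1}^k(n_i-1)$ and no rainbow triangle; apply Theorem~\ref{Thm:G-Part} to get a Gallai partition with reduced graph $2$-colored, say in colors $1$ and $2$. The base case $k=2$ is exactly Theorem~\ref{Thm:RtP3}. For the inductive step, one wants to either find the monochromatic $n_1P_3$ directly, or locate a color $j$ and a vertex set of size $\ge n_j-1$ that can be ``peeled off,'' reducing to a $(k-1)$-colored graph of the appropriate smaller order. The cleanest approach: look at a largest part $H_1$ of the Gallai partition. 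If $|H_1|$ is large, recurse inside $H_1$ (which still has a rainbow-triangle-free coloring on all $k$ colors); if $|H_1|$ is small, then the reduced graph has many parts, and the two-colored reduced structure on $\ge$ (something) parts either yields many disjoint monochromatic edges between parts — each such edge, since each part has $\ge 1$ vertex and we can pick up an endpoint, extends to a $P_3$ — or concentrates in one color where Theorem~\ref{Thm:RtP3}-type counting applies. One can also invoke Theorem~\ref{Thm:GS04} to extract a large monochromatic star, and a monochromatic star $S_t$ contains roughly $t/2$ disjoint $P_3$'s after borrowing leaves, which handles the case where one color dominates.

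The hard part, as usual with linear forests, is the bookkeeping in the reduced graph: converting edges and stars between Gallai parts into \emph{vertex-disjoint} copies of $P_3$ while tracking exactly how many vertices each part contributes, and making the induction close with the precise constant $2n_1+1+\sum(n_i-1)$ rather than something a few vertices off. In particular, the asymmetry between $n_1$ (which appears with coefficient $3$ in the two-color Ramsey number but coefficient $2$ in the conjectured Gallai formula) must be exploited: in a Gallai coloring one cannot afford the ``worst'' two-color extremal configuration in both of the dominant colors simultaneously, and pinning down why — presumably because the Gallai partition forces one of colors $1,2$ to behave like the smaller side — is the crux. I expect this color-symmetrization step, together with ensuring the peeled-off color class has exactly the right size to feed the induction, to be the main obstacle; the star and matching extractions are routine once the framework is set up.
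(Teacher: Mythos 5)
The statement you are trying to prove is stated in the paper as a conjecture: the authors do not prove it. What they do prove is the lower bound $gr_{k}(K_{3} : n_{1}P_{3}, \dots, n_{k}P_{3}) \geq 2n_{1} + 1 + \sum_{i=1}^{k}(n_{i}-1)$ together with a much weaker upper bound of the form $4(n_{1}-1) + \frac{9n_{1}-3}{2}\ln\left(\frac{3n_{1}}{2}-1\right) + 1 + (n_{1}-1)(k-2)$, with equality established only when $n_{1}=2$ (where all Gallai parts are singletons and the problem collapses to the two-color case). Your lower bound argument is correct and is essentially the paper's construction: you seed with the two-color extremal graph on $3n_{1}+n_{2}-2$ vertices where they seed with a monochromatic $K_{3n_{1}-1}$ and then add $n_{2}-1$ vertices in color $2$, but the counts agree, the observation that every color-$i$ edge meets one of the $n_{i}-1$ new vertices (hence at most $n_{i}-1$ disjoint $P_{3}$'s in color $i$) is the right one, and no rainbow triangle is created.

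The genuine gap is the upper bound, which you present only as a plan whose decisive step you yourself flag as unresolved ("the crux", "the main obstacle"). The route you sketch — take a Gallai partition, extract monochromatic matchings among the large parts of the reduced graph and convert each matching edge into vertex-disjoint copies of $P_{3}$ — is exactly what the paper carries out, and it is instructive to see where it stalls: the resulting bound (their Fact on the number of parts of order at least $\frac{3n_{red}}{2t}$) only controls the total order up to $4(n_{red}-1) + \frac{9n_{red}-3}{2}\sum_{t\geq 3}\frac{1}{t-1}$, i.e.\ a harmonic sum producing the $\ln$ term, not the conjectured linear-in-$n_{1}$ constant. Your proposed refinements (recursing into a large part, using Theorem~\ref{Thm:GS04} to get a large monochromatic star, "peeling off" a color class of size $n_{j}-1$) are all plausible ingredients, but none of them is developed to the point of closing the induction with the exact constant $2n_{1}+1+\sum(n_{i}-1)$; in particular you give no argument for why the Gallai structure forbids the two dominant colors from both realizing their worst-case configurations, which you correctly identify as the heart of the matter. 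As it stands, your proposal establishes the lower bound only, leaving the conjecture open exactly where the paper leaves it open.
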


As a partial result, we prove the following.

\begin{theorem}
For positive integers $k, n_{1}, n_{2}, \dots, n_{k}$ with $n_{1} = \max \{n_{i}\}$, we have
$$
2n_{1} + 1 + \sum_{i = 1}^{k} (n_{i} - 1) \leq gr_{k}(K_{3} : n_{1}P_{3}, n_{2}P_{3}, \dots, n_{k}P_{3})
$$
$$
\leq 4(n_{1} - 1) + \frac{9n_{1} - 3}{2} \ln \left( \frac{3n_{1}}{2} - 1\right) + 1 + (n_{1} - 1)(k - 2).
$$
Moreover, when $n_{1} = 2$, we have
$$
gr_{k}(K_{3} : n_{1}P_{3}, n_{2}P_{3}, \dots, n_{k}P_{3}) = 2n_{1} + 1 + \sum_{i = 1}^{k} (n_{i} - 1).
$$
\end{theorem}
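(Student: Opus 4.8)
I would relabel so that $n_1\ge n_2\ge\cdots\ge n_k$ and start from an extremal $2$-coloring of $K_{3n_1+n_2-2}$ (in colors $1$ and $2$) with no color-$1$ copy of $n_1P_3$ and no color-$2$ copy of $n_2P_3$, which exists by Theorem~\ref{Thm:RtP3}. Exactly as in Lemma~\ref{Lem:BipLowBnd}, append for each $i\ge 3$ with $n_i\ge 2$ a set of $n_i-1$ new vertices all of whose incident edges receive color $i$; then every $P_3$ in color $i$ meets this clique of new vertices, so color $i$ has no $n_iP_3$, and no rainbow triangle is introduced. A color $i\ge 3$ with $n_i=1$ only has to be present while avoiding $P_3$, which one arranges by recoloring a single edge inside the original color-$1$ clique, choosing all such recolored edges so that their union is a matching (this keeps the coloring Gallai). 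The outcome is a Gallai $k$-coloring of $K_{2n_1+\sum_{i=1}^k(n_i-1)}$ with no color-$i$ copy of $n_iP_3$ for any $i$, which gives the lower bound.

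\textbf{General upper bound.} Since $n_iP_3\subseteq n_1P_3$ for every $i$, monotonicity of the Gallai--Ramsey number gives $gr_k(K_3:n_1P_3,\dots,n_kP_3)\le gr_k(K_3:n_1P_3)$, so it suffices to bound the diagonal quantity. Apply Theorem~\ref{Lem:OnlySmall} with $H=n_1P_3$ (so $s(H)=n_1$) and $R=7n_1-4$, which is at least $R(n_1P_3,n_1P_3)=4n_1-1$ by Theorem~\ref{Thm:RtP3}. It remains to show that every $3$-coloring of $K_{7n_1-4}$ with no rainbow triangle and a G-partition into parts of order at most $n_1-1$ contains a monochromatic $n_1P_3$. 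Write the two colors between parts as red and blue and the color inside parts as green, so the red and blue subgraphs are blow-ups of the reduced graph. If there were no monochromatic $n_1P_3$, then the red subgraph would have a maximum $P_3$-packing of size at most $n_1-1$, and deleting the at most $3(n_1-1)$ vertices it covers would kill every red $P_3$; doing the same for blue deletes at most $6(n_1-1)$ vertices in all, leaving a set $W$ with $|W|\ge(7n_1-4)-6(n_1-1)=n_1+2$ on which the red and blue subgraphs are both $P_3$-free, hence matchings. But the red and blue edges inside $W$ together form a complete multipartite graph with parts of order at most $n_1-1$, so they number at least $\binom{|W|}{2}-\tfrac{n_1-2}{2}|W|$, whereas two matchings give at most $|W|$ edges; this forces $|W|\le n_1+1$, a contradiction. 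Hence $gr_k(K_3:n_1P_3)\le 7n_1-4+(n_1-1)(k-2)$, and since $7n_1-4\le 4(n_1-1)+\tfrac{9n_1-3}{2}\ln\!\bigl(\tfrac{3n_1}{2}-1\bigr)+1$ for every $n_1\ge 2$, the stated bound follows.

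\textbf{The case $n_1=2$.} Here each $n_i\in\{1,2\}$; let $q=\sum_{i=1}^k(n_i-1)$ count the colors requiring $2P_3$, so the target is $q+5$ and the lower bound is the construction above. For the upper bound I would rerun the reduction behind Theorem~\ref{Lem:OnlySmall} with $H=2P_3$ while tracking colors more carefully: a color $i$ with $n_i=1$ must avoid $P_3$, so even one vertex sending all but one of its edges in that color already yields a $P_3$, and such a color therefore contributes $0$ rather than $s(H)-1=1$ to the count of removable vertices. Consequently, in a hypothetical Gallai $k$-coloring of $K_{q+5}$ avoiding every forbidden configuration, the residual $2$-colored complete graph $G'$ obtained from the G-partition (whose parts now have order at most $1$) satisfies $|G'|\ge 5+\#\{\text{heavy colors among the two colors of }G'\}$. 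A short case analysis then finishes: if both colors of $G'$ are heavy then $|G'|\ge 7=R(2P_3,2P_3)$; if exactly one is heavy then $|G'|\ge 6=R(2P_3,P_3)$; and if both are light then $G'$ would be covered by two matchings, forcing $|G'|\le 3<5$. In each case $G'$ contains a forbidden configuration, a contradiction, so $gr_k(K_3:n_1P_3,\dots,n_kP_3)=q+5=2n_1+1+\sum_{i=1}^k(n_i-1)$.

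\textbf{Where the difficulty lies.} The genuinely fussy part is the case $n_1=2$: making the refined reduction rigorous --- in particular justifying that the two colors surviving in $G'$ may be regarded as already ``charged,'' and handling cleanly the requirement that all $k$ colors actually appear, both in the extremal construction and in the coloring under analysis. By contrast the general upper bound is routine once Theorem~\ref{Lem:OnlySmall} and the transversal/edge-counting estimate above are in place; the only thing to check there is the numerical inequality that turns the clean bound $7n_1-4$ into the stated $\ln$-expression, which holds with room to spare for all $n_1\ge 2$.
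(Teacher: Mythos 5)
Your proposal is essentially correct, but the general upper bound takes a genuinely different route from the paper's. The paper never mentions $P_3$-packings or a leftover set $W$; it bounds $|G'|$ directly by controlling how many parts of the G-partition can be large: for each $t\ge 2$ at most $3t-2$ parts can have order at least $\tfrac{3n_{1}}{2t}$, since otherwise $R(tP_2,tP_2)=3t-1$ (Corollary~\ref{Cor:Matching}) gives a monochromatic $t$-edge matching in the reduced graph that inflates to $n_1$ disjoint monochromatic copies of $P_3$. Summing these thresholds over $t$ is precisely what produces the $\tfrac{9n_{1}-3}{2}\ln\left(\tfrac{3n_{1}}{2}-1\right)$ term in the statement. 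Your route instead verifies the hypothesis of Theorem~\ref{Lem:OnlySmall} at $R=7n_1-4$ by deleting the vertices of maximum red and blue $P_3$-packings and edge-counting on the remainder; the steps check out (a $P_3$-free graph is a matching, and the multipartite count does force $|W|\le n_1+1$), and the resulting linear bound $7n_1-4+(n_1-1)(k-2)$ is strictly stronger than the paper's for large $n_1$. Do note that at $n_1=2$ your comparison inequality is tight rather than comfortable ($10\le 5+\tfrac{15}{2}\ln 2\approx 10.2$), so "room to spare" oversells it, but it does hold for all $n_1\ge 2$. The lower-bound constructions coincide (yours is slightly more careful about colors with $n_i=1$, which the paper glosses over).

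For the $n_1=2$ sharpness claim, the paper simply observes that all parts have order $1$, so the reduced graph is a $2$-coloring, and cites Theorem~\ref{Thm:RtP3}; as you correctly point out, that only settles the case where every $n_i=2$. Your refined accounting of light colors (a color required only to avoid a single $P_3$ cannot contribute a vertex to the removable set, so the residual $2$-coloring retains $5+h$ vertices where $h$ counts heavy colors among its two colors, and the three cases $h=2,1,0$ are closed by $R(2P_3,2P_3)=7$, $R(2P_3,P_3)=6$, and the two-matchings count) is the right idea and goes beyond what the paper writes down. However, this step cannot be obtained by quoting Theorem~\ref{Lem:OnlySmall}, whose statement concerns a single target graph $H$; it requires reopening that theorem's proof and redoing the bound on $|T|$ color by color. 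You acknowledge this, and it is the one place where your write-up remains a sketch rather than a proof --- the same place, to be fair, where the paper's own proof is incomplete.
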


\begin{proof}
If $k = 1$, the result is immediate and if $k = 2$, the result follows from Theorem~\ref{Thm:RtP3}, so suppose $k \geq 3$. For the lower bound, we follow the proof of Lemma~\ref{Lem:BipLowBnd}. Let $G_{1}$ be a copy of $K_{3n_{1} - 1}$ colored entirely with color $1$. Given $G_{i}$, construct $G_{i + 1}$ by adding $n_{i} - 1$ vertices to $G_{i}$ with all edges incident to the new vertices having color $i + 1$. The resulting coloring $G_{k}$ is a coloring of $K_{n}$ where $n =  2n_{1} + \sum_{i = 1}^{k} (n_{i} - 1)$ which contains no rainbow triangle and no monochromatic copy of $n_{i}P_{3}$ in color $i$.

For the upper bound, let $G$ be a G-coloring of $K_{n'}$ where 
$$
n' \geq 4(n_{1} - 1) + \frac{9n_{1} - 3}{2} \ln \left( \frac{3n_{1}}{2} - 1\right) + 1 + (n_{1} - 1)(k - 2).
$$
By Theorem~\ref{Thm:G-Part}, there is a G-partition of $V(G)$. Choosing one vertex from each part of this partition produces a $2$-colored complete graph as a subgraph of the original graph. Supposing that colors $red$ and $blue$ are the two colors appearing in the G-partition with $n_{red} \geq n_{blue}$, this means that by Theorem~\ref{Thm:RtP3}, there are at most $3n_{red} + n_{blue} - 2$ parts. Conversely, by Theorem~\ref{Lem:OnlySmall}, we may consider a $3$-colored $K_{n}$ where 
$$
n \geq 4(n_{1} - 1) + \frac{9n_{1} - 3}{2} \ln \left( \frac{3n_{1}}{2} - 1\right) + 1
$$
with a G-partition in which each part has order at most $n_{red} - 1$.

At this point, we note that if $n_{1} = 2$, each part has order $1$, meaning that the graph is simply a $2$-coloring and the sharp result follows from Theorem~\ref{Thm:RtP3}. It is also worthwhile to observe that we have already shown that $n$ must be at most $(3n_{red} + n_{blue} - 2)(n_{red} - 1)$ so
$$
gr_{k}(K_{3} : n_{1}P_{3}, n_{2}P_{3}, \dots, n_{k}P_{3}) < 4n_{1}^{2} + (n_{1} - 1)(k - 2),
$$
but our goal is closer to a linear bound as a function of $n_{1}$.


By Corollary~\ref{Cor:Matching}, there is a monochromatic matching within the reduced graph of this G-partition. Given an integer $t \geq 2$, if we restrict our attention to those ``large'' parts of order at least $\frac{3n_{red}}{2t}$, then if there were at least $3t - 1$ such parts, Corollary~\ref{Cor:Matching} would guarantee the existence of a matching on $t$ edges within the reduced graph of these ``large'' parts. With such a matching, it is easy to construct the desired monochromatic $n_{red}P_{3}$ as follows. For each matching edge, say with corresponding parts $A$ and $B$, select $\frac{n_{red}}{2t}$ vertices in $A$ (call this set $A'$) and $\frac{n_{red}}{2t}$ vertices from $B$ (call this set $B'$) to be the central vertices. For each selected vertex in $A$, choose two previously unused vertices of $B$ to construct a copy of $P_{3}$ and similarly for each vertex in $B$, choose two previously unused vertices of $A$ to construct a copy of $P_{3}$. With $t$ edges within the matching, this results in $t \frac{2 n_{red}}{2t} = n_{red}$ copies of $P_{3}$ all in one color, as desired. See Figure~\ref{Fig:Matching} for an example of this construction. Thus, we arrive at the following fact.

\begin{figure}[H]
\begin{center}
\includegraphics{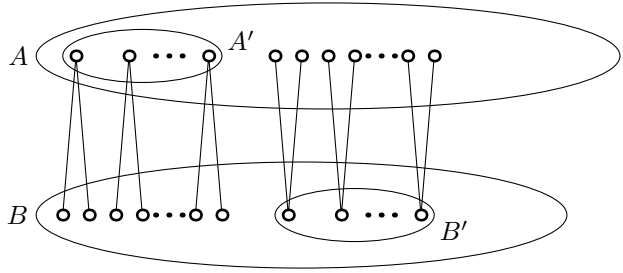}
\caption{Monochromatic copies of $P_{3}$ \label{Fig:Matching}}
\end{center}
\end{figure}

\begin{fact}\label{Fact:LogFact}
For any integer $t \geq 2$, there can be at most $3t - 2$ parts of order at least $\frac{3n_{red}}{2t}$.
\end{fact}

In particular, this means that there can be up to $4$ parts of order $n_{red} - 1$ but the $5^{th}$ part must have order at most $\frac{3n_{red} - 1}{4}$. Similarly, the $8^{th}$ largest part must have order at most $\frac{3n_{red} - 1}{6}$ and so on. This means that
\beqs
n & \leq & 4(n_{red} - 1) + \sum_{t = 3}^{\frac{3n_{red}}{2}} 3\left( \frac{3n_{red} - 1}{2t - 2} \right)\\
~ & \leq & 4(n_{red} - 1) + \frac{9n_{red} - 3}{2} \sum_{t = 3}^{\frac{3n_{red}}{2}} \frac{1}{t - 1} \\
~ & < & 4(n_{red} - 1) + \frac{9n_{red} - 3}{2} \ln \left( \frac{3n_{red}}{2} - 1\right)\\
~ & \leq & 4(n_{1} - 1) + \frac{9n_{1} - 3}{2} \ln \left( \frac{3n_{1}}{2} - 1\right),
\eeqs
contradicting the assumed lower bound on $n$ and completing the proof.
\end{proof}


It appears as though the general multicolor classical Ramsey number for copies of $P_{3}$ is not known so we conjecture the following.

\begin{conjecture}
For positive integers $k, n_{1}, n_{2}, \dots, n_{k}$ with $n_{1} = \max \{n_{i}\}$, we have
\beqs
R(n_{1}P_{3}, n_{2}P_{3}, \dots, n_{k}P_{3}) & = & gr_{k}(K_{3} : n_{1}P_{3}, n_{2}P_{3}, \dots, n_{k}P_{3})\\
~ & = & 2n_{1} + 1 + \sum_{i = 1}^{k} (n_{i} - 1).
\eeqs
\end{conjecture}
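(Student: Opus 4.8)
The plan is to establish the two bounds separately: the lower bound by an explicit Gallai coloring, and the upper bound by feeding the reduction of Theorem~\ref{Lem:OnlySmall} into a ``blow-up'' argument that turns a large monochromatic matching in a reduced graph into a large monochromatic union of copies of $P_3$. For the lower bound I would follow the recipe of Lemma~\ref{Lem:BipLowBnd}: color $K_{3n_1-1}$ entirely in color $1$ (this has no monochromatic $n_1P_3$ since $n_1P_3$ has $3n_1$ vertices), and then for each $i=2,\dots,k$ adjoin $n_i-1$ new vertices all of whose incident edges receive color $i$ (when $n_i=1$ this adds no vertex, so one instead recolors a single edge to color $i$ so that all $k$ colors appear). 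In the resulting coloring, color $i$ with $i\ge 2$ is supported on an independent set together with only $n_i-1$ ``hub'' vertices, so every nontrivial path in color $i$ uses a hub and there are fewer than $n_i$ such components; no rainbow triangle is created; and the total number of vertices is $3n_1-1+\sum_{i=2}^{k}(n_i-1)=2n_1+\sum_{i=1}^{k}(n_i-1)$, which gives the claimed lower bound.

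For the upper bound I may assume $k\ge 3$, since $k=1$ is immediate and $k=2$ is exactly Theorem~\ref{Thm:RtP3}. Given a G-coloring of $K_{n'}$ on the asserted number of vertices, I apply Theorem~\ref{Thm:G-Part} to obtain a G-partition whose between-part colors are, say, $red$ and $blue$ with $n_{red}\ge n_{blue}$; picking one vertex from each part yields a $2$-colored complete graph, so Theorem~\ref{Thm:RtP3} forces the number of parts to be at most $3n_{red}+n_{blue}-2$. Invoking Theorem~\ref{Lem:OnlySmall} (with $n_{red}$ playing the role of $s(H)$), I reduce to a $3$-coloring of $K_n$ with $n\ge 4(n_1-1)+\frac{9n_1-3}{2}\ln\!\left(\frac{3n_1}{2}-1\right)+1$ in which every part of the G-partition has order at most $n_{red}-1$.

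The heart of the argument is a tiered count on the part sizes. For an integer $t\ge 2$, suppose at least $3t-1$ parts have order at least $\frac{3n_{red}}{2t}$. Then Corollary~\ref{Cor:Matching}, applied to the reduced graph of just these parts, yields a monochromatic matching on $t$ edges; from each matched pair $A,B$ (all $A$--$B$ edges a single color) I take $\frac{n_{red}}{2t}$ ``centers'' in each of $A$ and $B$, giving every center two private leaves on the opposite side, which uses exactly $\frac{3n_{red}}{2t}$ vertices in each part and builds $\frac{n_{red}}{t}$ copies of $P_3$, hence a monochromatic $n_{red}P_3$ in total --- a contradiction. So at most $3t-2$ parts can have order $\ge\frac{3n_{red}}{2t}$. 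Feeding $t=2,3,4,\dots$ into this (up to $4$ parts of order $n_{red}-1$, then the next batch of order $\le\frac{3n_{red}-1}{4}$, then $\le\frac{3n_{red}-1}{6}$, and so on, stopping once $\frac{3n_{red}}{2t}$ drops below $1$) gives
$$
n \;\le\; 4(n_{red}-1) + \frac{9n_{red}-3}{2}\sum_{t=3}^{3n_{red}/2}\frac{1}{t-1} \;<\; 4(n_{red}-1) + \frac{9n_{red}-3}{2}\ln\!\left(\frac{3n_{red}}{2}-1\right),
$$
and since $n_{red}\le n_1$ this contradicts the assumed lower bound on $n$. For the final assertion, when $n_1=2$ we have $n_{red}\le 2$, so every part has order at most $1$ and the reduced graph is a genuine $2$-coloring; Theorem~\ref{Thm:RtP3} then supplies the exact value, which coincides with the lower bound.

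The step I expect to be the main obstacle is making the reduction of Theorem~\ref{Lem:OnlySmall} --- which is phrased for a single forbidden bipartite $H$ --- interact cleanly with the \emph{multicolored} target, where color $i$ forbids $n_iP_3$ rather than every color forbidding the same graph. One must settle that ``small'' should mean order at most $n_{red}-1$ for the dominant partition color $red$ (the analogue of Lemma~\ref{Lem:Middle} for matchings of $P_3$), and then verify that the residual set of ``nearly monochromatic'' vertices peeled off in that reduction cannot itself host some $n_iP_3$. Everything afterwards is bookkeeping, the only delicate point being the check that $\frac{3n_{red}}{2t}$ is precisely the threshold forcing $t$ matched pairs of parts of that size to contain $n_{red}$ vertex-disjoint copies of $P_3$, and that the resulting harmonic sum telescopes to the stated logarithm.
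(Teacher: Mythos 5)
The statement you are trying to prove is a \emph{conjecture} that the paper explicitly leaves open (it is introduced with the remark that even the multicolor classical Ramsey number $R(n_1P_3,\dots,n_kP_3)$ appears to be unknown), and your proposal does not close it. What you have written is essentially a reconstruction of the paper's proof of the \emph{partial} result stated just before the conjecture: the lower bound construction, the reduction via Theorem~\ref{Lem:OnlySmall}, Fact~\ref{Fact:LogFact} with the threshold $\frac{3n_{red}}{2t}$, and the harmonic-sum/logarithm bookkeeping are all exactly the paper's argument. But that argument only yields the upper bound
$$
gr_{k}(K_{3} : n_{1}P_{3}, \dots, n_{k}P_{3}) \;\le\; 4(n_{1} - 1) + \frac{9n_{1} - 3}{2} \ln \left( \frac{3n_{1}}{2} - 1\right) + 1 + (n_{1} - 1)(k - 2),
$$
which exceeds the conjectured value $2n_{1} + 1 + \sum_{i=1}^{k}(n_{i} - 1)$ by a term of order $n_{1}\ln n_{1}$ whenever $n_{1}\ge 3$ (the two coincide only in the $n_{1}=2$ case you handle at the end, which is precisely the ``moreover'' clause of the paper's theorem). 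Nothing in your tiered count can be sharpened to eliminate the logarithm without a genuinely new idea, so the conjectured equality $gr_{k} = 2n_{1}+1+\sum(n_{i}-1)$ remains unproved by your argument.

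Two further gaps are worth naming. First, the conjecture asserts a chain of \emph{two} equalities, the first being that the classical multicolor Ramsey number $R(n_1P_3,\dots,n_kP_3)$ equals the same quantity; your proposal never addresses this at all, and it cannot follow from Gallai-partition machinery since general $k$-colorings need not admit a G-partition. Second, even for the partial result, the step you yourself flag as the ``main obstacle'' --- applying Theorem~\ref{Lem:OnlySmall}, which is stated for a single forbidden bipartite graph $H$, to the asymmetric multicolored target where color $i$ forbids $n_iP_3$ --- is left unresolved; you would need to either reprove the reduction in the multicolored setting or restrict to the diagonal case $n_1=\dots=n_k$. As it stands, your proposal establishes (modulo that reduction issue) only the bounds already proved in the paper, not the conjecture.
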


\section{Conclusion and Further Discussion}\label{Sec:Conj}

Based on our results and, in fact, the entire literature of results on Gallai-Ramsey numbers for a rainbow triangle or monochromatic bipartite graph, we propose the following broad conjecture.

\begin{conjecture}\label{Conj:GR-Bip}
Given any connected bipartite graph $H$ with Ramsey number $R(H, H) = R$, we believe that
$$
gr_{k}(K_{3} : H) = R + (s(H) - 1)(k - 2).
$$
\end{conjecture}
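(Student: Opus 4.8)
The plan is to retain the lower bound from Lemma~\ref{Lem:BipLowBnd} and concentrate entirely on the matching upper bound $gr_k(K_3:H) \le R + (s(H)-1)(k-2)$. Writing $a = s(H)$, Theorem~\ref{Lem:OnlySmall} reduces this to a single three-color statement: every G-coloring of $K_R$ using only three colors, in which all parts of a G-partition have order at most $a-1$, contains a monochromatic copy of $H$. First I would dispose of the third color. In such a coloring the two colors of the G-partition (say red and blue) occupy all edges between parts, while the third color is confined to edges inside individual parts; since each part has order at most $a-1$ and $H$ is connected with $|H| \ge a > a-1$, no copy of $H$ can be monochromatic in the third color. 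Thus it suffices to locate $H$ in the red/blue structure, i.e. to embed $H$ into the two-colored reduced graph $\mathcal{R}$, with the one wrinkle that two vertices of $H$ placed in a common part must be non-adjacent in $H$, since their edge would otherwise be the forbidden third color.

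The easy regime is when the number of parts $t$ is large. If $t \ge R$, pick one vertex from each part to obtain a two-colored $K_t \supseteq K_R$; by $R(H,H) = R$ this contains a monochromatic copy of $H$, and since its vertices lie in distinct parts every edge is genuinely red or blue, giving the desired copy in $K_R$. The whole difficulty therefore lives in the complementary regime $t \le R-1$, where the parts must be few and large, since $(R-1)(a-1) \ge R$. Here the bipartiteness of $H$ is essential: an independent set of $H$ may be packed into a single part, so what is really needed is a \emph{capacitated} monochromatic embedding of $H$ into $\mathcal{R}$, in which each part acts as a vertex of capacity $a-1$ able to absorb any independent set of $H$ of that size. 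This is a Ramsey-type statement strictly stronger than $R(H,H)$, and proving it is the heart of the matter.

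To attack the few-parts regime I would follow the template that succeeds for $K_{3,m}$ in the proof of Theorem~\ref{Thm:K3m}. Fix a part $X$ of maximum order (at least $2$, since $t \le R-1$ parts of total order $R$ cannot all be singletons), and split the remaining vertices into the set $A$ joined to $X$ in red and the set $B$ joined to $X$ in blue. If either side is large, then it together with $X$ already yields a monochromatic complete bipartite graph $K_{a,\,\text{large}}$, from which one hopes to read off $H$. If instead $A$ and $B$ are comparably large, then each vertex of $A$ can have only boundedly many blue edges into $B$ (else $X$ together with those neighbours builds a blue $H$), symmetrically for $B$, and a double-counting of the red/blue edges between $A$ and $B$ of the form $(|A|+|B|)(a-1) < |A|\cdot|B|$ forces the missing monochromatic copy. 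Theorem~\ref{Thm:GS04}, guaranteeing a monochromatic star on at least $\tfrac{2n}{5}$ vertices, together with Lemma~\ref{Lem:Middle}, can be used to locate the large same-colored neighbourhoods that seed these structures.

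The hard part will be making this embedding work for an \emph{arbitrary} connected bipartite $H$ rather than a complete bipartite one. For $K_{\ell,m}$ the step ``a large monochromatic $K_{a,\,\text{large}}$ contains $H$'' is immediate, but a general connected bipartite $H$ need not embed into a complete bipartite graph of modest size, so the clean counting contradiction no longer delivers $H$ directly. I expect one must replace the ``contains $K_{a,m}$'' shortcut by a genuine embedding that distributes the two color classes of $H$ across the parts while respecting both the red/blue pattern of $\mathcal{R}$ and the capacity constraint, quite possibly by induction on the number of full parts or on $|H|$. Controlling this capacitated embedding uniformly over all connected bipartite $H$ — equivalently, showing that $R$ vertices distributed into parts of order at most $a-1$ always admit a monochromatic blow-up of $H$ — is the obstacle I would expect to consume most of the effort, and is presumably why the statement is posed only as a conjecture.
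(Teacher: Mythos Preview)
This statement is Conjecture~\ref{Conj:GR-Bip}; the paper does not prove it, so there is no argument of the paper's to compare against. What you have written is a proof \emph{plan}, and you correctly locate the obstruction.

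Your reduction via Theorem~\ref{Lem:OnlySmall} and the split into the regimes $t \ge R$ and $t \le R-1$ are exactly how the paper handles the special cases $s(H)=2$ (Theorem~\ref{Thm:sis2}) and $H = K_{3,m}$ (Theorem~\ref{Thm:K3m}), and your treatment of the easy regime $t \ge R$ is correct. One small correction: the hypothesis of Theorem~\ref{Lem:OnlySmall} as stated concerns \emph{every} $3$-coloured G-coloring of $K_R$ with small parts, and in such a coloring the within-part edges need not all carry the third colour, so your sentence ``their edge would otherwise be the forbidden third color'' is not literally true. This is harmless, since one may either (i) observe from the proof of Theorem~\ref{Lem:OnlySmall} that it suffices to treat the specific $G'$ in which within-part edges \emph{are} all recoloured, or (ii) simply restrict to embeddings that use only between-part edges, which is what you do anyway.

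The genuine gap is exactly the one you name. In the hard regime $t \le R-1$ you need a capacitated monochromatic embedding of $H$ into the $2$-coloured reduced graph, and the double-counting template you import from the $K_{3,m}$ proof does not transfer: that argument manufactures a monochromatic $K_{a,\,\text{large}}$, which instantly contains $K_{3,m}$ but need not contain an arbitrary connected bipartite $H$. Nothing in the paper, and nothing in your outline, supplies this missing step, and that is indeed why the statement is posed as a conjecture rather than a theorem. Your diagnosis is accurate; what you have is an honest sketch of where a proof would have to go, not a proof.
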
 

In particular, this would mean that the lower bound provided by Lemma~\ref{Lem:BipLowBnd} is always sharp.

\bibliography{ref}

\begin{thebibliography}{10}

\bibitem{AI08}
M.~Axenovich and P.~Iverson.
\newblock Edge-colorings avoiding rainbow and monochromatic subgraphs.
\newblock {\em Discrete Math.}, 308(20):4710--4723, 2008.

\bibitem{B83}
S.~A. Burr.
\newblock Diagonal {R}amsey numbers for small graphs.
\newblock {\em J. Graph Theory}, 7(1):57--69, 1983.

\bibitem{CE97}
K.~Cameron and J.~Edmonds.
\newblock Lambda composition.
\newblock {\em J. Graph Theory}, 26(1):9--16, 1997.

\bibitem{CLP17}
M.~Chen, Y.~Li, and C.~Pei.
\newblock Gallai-{R}amsey numbers for odd cycles and complete bipartite graphs.
\newblock {\em To appear}.

\bibitem{CG83}
F.~R.~K. Chung and R.~L. Graham.
\newblock Edge-colored complete graphs with precisely colored subgraphs.
\newblock {\em Combinatorica}, 3(3-4):315--324, 1983.

\bibitem{CL75}
E.~J. Cockayne and P.~J. Lorimer.
\newblock The {R}amsey number for stripes.
\newblock {\em J. Austral. Math. Soc.}, 19:252--256, 1975.

\bibitem{EHM91}
G.~Exoo, H.~Harborth, and I.~Mengersen.
\newblock On {R}amsey numbers of {$K_{2,n}$}.
\newblock In {\em Graph theory, combinatorics, algorithms, and applications
  ({S}an {F}rancisco, {CA}, 1989)}, pages 207--211. SIAM, Philadelphia, PA,
  1991.

\bibitem{FS76}
R.~J. Faudree and R.~H. Schelp.
\newblock Ramsey numbers for all linear forests.
\newblock {\em Discrete Math.}, 16(2):149--155, 1976.

\bibitem{FGP15}
J.~Fox, A.~Grinshpun, and J.~Pach.
\newblock The {E}rd{\H o}s-{H}ajnal conjecture for rainbow triangles.
\newblock {\em J. Combin. Theory Ser. B}, 111:75--125, 2015.

\bibitem{FMO10}
S.~Fujita, C.~Magnant, and K.~Ozeki.
\newblock Rainbow generalizations of {R}amsey theory: a survey.
\newblock {\em Graphs and Combin.}, 1:1--30, 2010.

\bibitem{FMO14}
S.~Fujita, C.~Magnant, and K.~Ozeki.
\newblock Rainbow generalizations of {R}amsey theory: a dynamic survey.
\newblock {\em Theo. Appl. Graphs}, 0(1):Article 1, 2014.

\bibitem{G67}
T.~Gallai.
\newblock Transitiv orientierbare {G}raphen.
\newblock {\em Acta Math. Acad. Sci. Hungar}, 18:25--66, 1967.

\bibitem{GSSS10}
A.~Gy{\'a}rf{\'a}s, G.~S{\'a}rk{\"o}zy, A.~Seb{\H o}, and S.~Selkow.
\newblock Ramsey-type results for gallai colorings.
\newblock {\em J. Graph Theory}, 64(3):233--243, 2010.

\bibitem{GS04}
A.~Gy{\'a}rf{\'a}s and G.~Simonyi.
\newblock Edge colorings of complete graphs without tricolored triangles.
\newblock {\em J. Graph Theory}, 46(3):211--216, 2004.

\bibitem{HM91}
H.~Harborth and I.~Mengersen.
\newblock The {R}amsey number of {$K_{3,3}$}.
\newblock In {\em Graph theory, combinatorics, and applications. {V}ol.\ 2
  ({K}alamazoo, {MI}, 1988)}, Wiley-Intersci. Publ., pages 639--644. Wiley, New
  York, 1991.

\bibitem{LMSSS17}
H.~Liu, C.~Magnant, A.~Saito, I.~Schiermeyer, and Y.~Shi.
\newblock Gallai-{R}amsey number for ${K}_{4}$.
\newblock {\em Submitted}.

\bibitem{LM02}
R.~Lortz and I.~Mengersen.
\newblock Bounds on {R}amsey numbers of certain complete bipartite graphs.
\newblock {\em Results Math.}, 41(1-2):140--149, 2002.

\bibitem{MP13}
T.~K. Mishra and S.~P. Pal.
\newblock Lower bounds for {R}amsey numbers for complete bipartite and
  3-uniform tripartite subgraphs.
\newblock {\em J. Graph Algorithms Appl.}, 17(6):671--688, 2013.

\bibitem{MR1670625}
S.~P. Radziszowski.
\newblock Small {R}amsey numbers.
\newblock {\em Electron. J. Combin.}, 1:Dynamic Survey 1, 30 pp. (electronic),
  1994.

\bibitem{R04}
V.~Rosta.
\newblock Ramsey theory applications.
\newblock {\em Electron. J. Combin.}, 11(1):Research Paper 89, 48, 2004.

\end{thebibliography}
\bibliographystyle{plain}

\end{document}